\newcommand{\N}{\mathbb{N}}
\newcommand{\PP}{\mathbb{P}}
\newcommand{\VI}{\mathcal{B}}
\newcommand{\point}{\mathfrak{p}}
\newcommand{\RI}{\mathcal{R_I}}
\newcommand{\SI}{\mathcal{S_I}}
\newcommand{\Y}{\mathcal{Y}}
\newcommand{\rank}{{\operatorname{rank}}}
\newcommand{\Ker}{{\operatorname{Ker}}}
\newcommand{\sat}{{\operatorname{sat}}}
\newcommand{\indeg}{{\operatorname{indeg}}}
\newcommand{\Proj}{{\operatorname{Proj}}}
\newcommand{\Sym}{{\operatorname{Sym}}}
\newcommand{\Rees}{{\operatorname{Rees}}}
\newcommand{\Syz}{{\operatorname{Syz}}}
\newcommand{\chara}{{\operatorname{char}}}
\newcommand{\pmt}[1]{\begin{pmatrix}#1\end{pmatrix}}
\def\ff{{\bf f}} 
\def\gg{{\bf g}} 
\def\XX{{\bf X}} 
\def\TT{{\bf T}}
\numberwithin{equation}{section} 
\newtheorem{pro}{Proposition}[section]
\newtheorem{Lem}[pro]{Lemma}
\newtheorem{Cor}[pro]{Corollary} 
\newtheorem{Theo}[pro]{Theorem}
\newtheorem*{Theoetoile}{Theorem} 
\newtheorem*{Corollaryetoile}{Corollary} 
\theoremstyle{definition}
\newtheorem{Exem}[pro]{Example}
\newtheorem{Rem}[pro]{Remark} 
\let\epsilon\varepsilon
\let\kappa=\varkappa
\title{Fibers of rational maps and Jacobian matrices}
\date{\today}
\author{Marc Chardin}
\address{Institut de Math\'{e}matiques de Jussieu. UPMC,  4 place Jussieu, 75005 Paris, France}
\email{marc.chardin@imj-prg.fr}
\author{Steven Dale Cutkosky}
\thanks{Steven Dale Cutkosky was partially supported by NSF}
\address{Department of Mathematics, University of Missouri, Columbia, MO 65211, USA}
\email{cutkoskys@missouri.edu}
\author{Quang Hoa Tran}
\address{University of Education, Hue University,  34 Le Loi St., Hue City, Vietnam
\& Institut de Math\'{e}matiques de Jussieu. UPMC, 4 place Jussieu, 75005 Paris, France}
\email{quang-hoa.tran@imj-prg.fr}
\begin{document}
\maketitle
\begin{abstract}
A rational map $\phi: \PP_k^m \dashrightarrow \PP_k^n$ is defined by homogeneous polynomials of a common degree $d$. We establish a linear bound in terms of $d$ for the number of $(m-1)$-dimensional fibers of $\phi$, by using ideals of minors of the Jacobian matrix. In particular, we answer affirmatively   Question~11 in \cite{QHTran17}. 
\end{abstract}

\section{Introduction}
Let $k$ be a field and $\phi: \PP_k^m\dashrightarrow \PP_k^n$ be a rational map. Such a map $\phi$ is defined by homogeneous polynomials $f_0,\ldots,f_n,$ of the same degree $d,$ in a standard graded polynomial  ring $R=k[X_0,\ldots,X_m],$ such that $\gcd(f_0,\ldots,f_n)=1.$ The ideal $I$ of $ R$ generated by these polynomials is called the \textit{base ideal of $\phi$}.  The scheme $\VI:=\Proj(R/I)\subset \PP_k^m $ is  called the \textit{base locus of $\phi$}. Let $B=k[T_0,\ldots,T_n]$ be the homogeneous coordinate ring of $\PP_k^n.$ The map $\phi$ corresponds to the $k$-algebra homomorphism $\varphi: B\longrightarrow R,$ which sends each $T_i$ to $f_i.$ Then the kernel of this homomorphism defines the closed image $\mathscr{S}$  of $\phi.$ In other words,  after degree renormalization, $k[f_0,\ldots,f_n]\simeq B/\Ker(\varphi)$ is the homogeneous coordinate ring of $\mathscr{S}.$  The minimal set of generators of $\Ker(\varphi)$ is called its \textit{implicit equations} and the \textit{implicitization problem} is to find these implicit equations.

The implicitization problem for  curves or surfaces has been of increasing interest to commutative algebraists and algebraic geometers due to its applications in Computer Aided Geometric Design as explained by Cox \cite{Cox05}. 

We blow up the base locus of $\phi$ and obtain the following commutative diagram
\begin{displaymath}
\xymatrix{ \Gamma \ar@{^{(}->}[rr]\ar[d]_{\pi_1}&& \PP_k^m\times \PP_k^n \ar[d]^{\pi_2} \\ \PP_k^m \ar@{-->}[rr]^{\phi} &&\PP_k^n.     }
\end{displaymath}
The variety $\Gamma$ is the blow-up of $\PP_k^m$ at $\VI$ and it is also the Zariski closure of the graph of $\phi$ in $\PP^m_k\times \PP_k^n.$ Moreover, $\Gamma$ is the geometric version of the Rees algebra of $I,$ i.e. $\Proj(\RI)=\Gamma.$ As $\RI$ is the graded domain defining $\Gamma,$ the projection  $\pi_2(\Gamma)=\mathscr{S}$  is defined by the graded domain $\RI\cap k[T_0,\ldots,T_n]$ and we can thus obtain the implicit equations of $\mathscr{S}$  from the defining equations of $\RI.$

In geometric modeling,  it is of vital importance to have a detailed knowledge of the geometry of the objects and of the parametric representations one is working with.  The question of how many times the same point is being painted (i.e. corresponds to distinct values of parameter) depends not only on the variety itself, but also on the parameterization.  It is of interest to determine the singularities of the parameterizations, in particular their fibers.  More precisely, we set
$$\pi:={\pi_2}_{\mid \Gamma}: \Gamma \longrightarrow \PP_k^n.$$ 
For every closed point $y\in \PP_k^n,$ we will denote by $k(y)$ its residue field. If $k$ is assumed to be  algebraically closed, then $k(y)\simeq k.$ The fiber of $\pi$  at  $y\in \PP_k^n$ is the subscheme
\begin{align*}
\pi^{-1}(y)=\Proj(\RI\otimes_B k(y)) \subset \PP_{k(y)}^m\simeq \PP_k^m.
\end{align*}
Suppose that  $m\geq 2$ and $\phi$ is generically finite onto its image.  Then the set
$$\Y_{m-1}=\{y\in \PP_k^n \mid \dim  \pi^{-1}(y)=m-1\}$$ 
consists of only  a finite number of points in $\PP_k^n.$  For each $y\in \Y_{m-1},\, \pi^{-1}(y)$ is a $(m-1)$-dimensional subcheme of $\PP_k^m$ and thus the unmixed component of maximal dimension is defined by a homogeneous polynomial $h_y\in R.$ Our main purpose is to establish a bound for  $\sum_{y\in \Y_{m-1}}\deg(h_y)$ in terms of the degree $d.$ A quadratic bound in $d$ for this sum of  one-dimensional fibers of a parameterization surface $\phi: \PP_k^2 \dashrightarrow \PP_k^3$ is given by the third named author \cite{QHTran17}. More precisely, he proved the following.
\begin{Theoetoile}\cite[Theorem 7 \& 9]{QHTran17}
Let $I$ be a homogeneous ideal of $R=k[X_0,X_1,X_2]$ generated by a minimal generating set of homogeneous polynomials $f_0,\ldots,f_3$ of degree $d.$ Suppose that  $I$ has codimension 2 and $\VI=\Proj(R/I)$ is locally a complete intersection of dimension zero. Let $I^\sat:=I\colon_R(X_0,X_1,X_2)^\infty$ be the saturation of $I$ and $\mu=\inf \{\nu \mid I_\nu^\sat\neq 0\}.$ 
\begin{enumerate}
\item[\em (i)] If $\mu <d,$ then 
$\sum\limits_{y\in \Y_1}\deg(h_y)\leq \mu.$
\item[\em (ii)] If $\mu=d$, then
\begin{displaymath}
\sum_{y\in \Y_1}\deg(h_y)\leq \begin{cases} 
4  & \text{if}  \quad d=3,\\
\left\lfloor \frac{d}{2} \right \rfloor  d-1 &\text{if} \quad d\geq 4.
\end{cases}
\end{displaymath}
\end{enumerate}
\end{Theoetoile}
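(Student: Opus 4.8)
\emph{Reduction to a single divisibility.} The plan is to first turn the estimate into a statement about the one form $\prod_{y\in\Y_1}h_y$, and then to bound its degree separately in the two regimes. Since $f_0,\dots,f_3$ are a minimal generating set in a single degree, they are $k$-linearly independent. For $y\in\Y_1$ I would, after choosing a coordinate with $y_j\neq0$ (say $y_0\neq0$, normalised to $1$), identify the fiber ideal as $(f_1-y_1f_0,f_2-y_2f_0,f_3-y_3f_0)$ and its height-one unmixed part as the principal ideal generated by $h_y=\gcd_i(f_i-y_if_0)$. First I would record that $\gcd(h_y,f_0)=1$: a common irreducible factor $p$ would divide every $f_i=y_if_0+(f_i-y_if_0)$, contradicting $\gcd(f_0,\dots,f_3)=1$. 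The same device shows the $h_y$ are pairwise coprime, since modulo a common factor the vector $(f_0,\dots,f_3)$ would be proportional to two distinct points $y,y'$ of $\PP_k^3$. Hence $\sum_y\deg(h_y)=\deg\big(\prod_y h_y\big)$, and it suffices to produce a nonzero form divisible by every $h_y$ and to control its degree.

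\emph{Proof of (i).} Here the point is that when $\mu<d$ every minimal-degree element of $I^\sat$ is divisible by each $h_y$. Fix $0\neq g\in I^\sat_\mu$ and $y\in\Y_1$ as above, and write $X_j^Ng=\sum_i b_{ij}f_i$ for $j=0,1,2$ and $N\gg0$, which is possible as $g\in I^\sat$. Reducing modulo $h_y$ and using $f_i\equiv y_if_0$ gives $X_j^N\overline g=\overline{B_j}\,\overline{f_0}$ in $R/(h_y)$ with $B_j=\sum_i b_{ij}y_i$. Since $\gcd(h_y,f_0)=1$, $f_0$ is a nonzerodivisor on the Cohen--Macaulay ring $R/(h_y)$, so $\xi=\overline g/\overline{f_0}$ lives in the total ring of fractions and $\mathfrak m^{3N}\xi\subseteq R/(h_y)$ (because $\mathfrak m^{3N}\subseteq(X_0^N,X_1^N,X_2^N)$). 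Thus $\xi$ lies in the saturation of $R/(h_y)$; but $V(h_y)$ is a hypersurface in $\PP_k^2$, and $H^1(\PP_k^2,\mathcal O(t))=0$ forces $R/(h_y)$ to be already saturated, so $\xi\in R/(h_y)$. As $\deg\xi=\mu-d<0$ and $R/(h_y)$ vanishes in negative degrees, $\xi=0$, i.e. $h_y\mid g$. By coprimality $\prod_y h_y\mid g$, whence $\sum_y\deg(h_y)\le\deg g=\mu$.

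\emph{Proof of (ii), and the main obstacle.} When $\mu=d$ the same computation only gives $\deg\xi=0$, so $\xi\in(R/(h_y))_0=k$ and one is left with the weaker congruence $g\equiv c_yf_0\pmod{h_y}$ for a scalar $c_y$; divisibility is lost, which is exactly why the bound must deteriorate. To recover control I would pass to the syzygies of $I$: for any syzygy $(a_0,\dots,a_3)$ one has $0=\sum_i a_if_i\equiv(\sum_i a_iy_i)f_0\pmod{h_y}$, and $\gcd(h_y,f_0)=1$ then yields $h_y\mid\sum_i a_iy_i$, a form of degree $\deg a_i$. The plan is to read the admissible minimal syzygy degrees of $I$ off the Hilbert--Burch matrix of $I^\sat$ (using that $R/I^\sat$ is Cohen--Macaulay of codimension $2$ and that $\mu=d$ pins down the generator degrees), to produce for each $y$ a syzygy with $\sum_i a_iy_i\neq0$ of the requisite degree, and to assemble these into one form divisible by $\prod_y h_y$. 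I expect the main obstacle to be precisely this bookkeeping: bounding the syzygy degrees, counting or excluding the degenerate $y$ for which $\sum_i a_iy_i$ vanishes, and pushing the numerology to the sharp constant $\lfloor d/2\rfloor d-1$. The value $d=3$ would have to be handled by hand, since there the generic degree count has not yet entered its asymptotic range and instead yields the exceptional bound $4$.
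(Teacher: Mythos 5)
A preliminary remark: the paper you are commenting on does not prove this statement anywhere; it is quoted from \cite[Theorems 7 \& 9]{QHTran17} as motivation, and the paper's own contribution (Theorem~\ref{Theorem2.5}, Corollary~\ref{Corollary4.4}) replaces it by the linear bound $3(d-1)$ via a completely different device, namely the gcd of the $3$-minors of the Jacobian matrix. So your attempt can only be measured against Tran's original argument, not against anything in this text. With that said, your part (i) is correct and essentially complete, modulo one ingredient that you assert rather than prove: the identification of the unmixed part of the fiber $\pi^{-1}(y)$ with $V(h_y)$, where $h_y=\gcd_i(f_i-y_if_0)$, equivalently $f_i\equiv y_if_0\pmod{h_y}$. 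This is precisely Lemma~\ref{Lemma2.3} (i.e. \cite[Lemma~10]{Botbol-Buse_Chardin14}) combined with Lemma~\ref{Lemma2.0}, so invoking it is legitimate, but it is not free. Granting it, your argument is sound: $\gcd(h_y,f_0)=1$ makes $f_0$ a nonzerodivisor on $R/(h_y)$; the element $\xi=\overline g/\overline{f_0}$ satisfies $(X_0^N,X_1^N,X_2^N)\,\xi\subseteq R/(h_y)$; the hypersurface ring $R/(h_y)$ is Cohen--Macaulay of dimension two, hence saturated, so $\xi\in(R/(h_y))_{\mu-d}=0$ when $\mu<d$; and pairwise coprimality of the $h_y$ gives $\prod_y h_y\mid g$, whence $\sum_{y\in\Y_1}\deg(h_y)\le\mu$. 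This is a clean and correct proof of (i).

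The genuine gap is part (ii), and it is not a minor one: what you wrote there is a plan, not a proof, as you yourself concede. Your two observations are correct — for $\mu=d$ one only gets $g\equiv c_yf_0\pmod{h_y}$ with $c_y\in k$, and every syzygy $(a_0,\dots,a_3)$ of the $f_i$ yields $h_y\mid\sum_ia_iy_i$ — but they produce no bound by themselves. Concretely, three things are missing. (a) You must exhibit, for each $y\in\Y_1$, a syzygy of controlled degree with $\sum_ia_iy_i\neq0$; nothing in your text rules out that all low-degree syzygies ``vanish at $y$''. (b) Even granting (a), the form $\sum_ia_i^{(y)}y_i$ depends on $y$, so the coprimality trick that worked in (i) (many $h_y$ dividing one and the same polynomial) no longer assembles the individual divisibilities into a single form of controlled degree divisible by $\prod_yh_y$; summing the separate inequalities is useless, since the number of points of $\Y_1$ is exactly what is being bounded. (c) The precise constants — $\lfloor d/2\rfloor d-1$ for $d\ge4$ and the exceptional value $4$ at $d=3$ — are the actual content of \cite[Theorem~9]{QHTran17}, whose proof requires a genuinely finer analysis of the syzygies and of the saturation $I^\sat$ (this is also where the hypothesis that $\VI$ is locally a complete intersection must enter, a hypothesis your outline never uses). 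Labeling (a)--(c) as ``bookkeeping'' defers the theorem rather than proving it, so the proposal as it stands establishes only part (i).
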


In this paper, we refine and generalize the above theorem. Recall that if  $\ff:=f_0,\ldots,f_n$ are polynomials in $R=k[X_0,\ldots,X_m],$ then the Jacobian matrix of $\ff$ is defined by
$$
J(\ff)=\left(\begin{array}{ccc}
\frac{\partial f_0}{\partial X_0}&\cdots& \frac{\partial f_0}{\partial X_m}\\
\vdots & &\vdots\\
\frac{\partial f_n}{\partial X_0}&\cdots&\frac{\partial f_n}{\partial X_m}
\end{array}
\right). 
$$
Denote by $I_s(J(\ff))$ the ideal of $R$ generated by the $s$-minors of $J(\ff),$ where $1\leq s\leq \min\{m+1,n+1\}.$ Our main result is the following.

\begin{Theoetoile}[Theorem~\ref{Theorem2.5}]
Let $I$ be a homogeneous ideal of $R=k[X_0,\ldots,X_m]$ generated by a minimal generating set of homogeneous polynomials $\ff:=f_0,\ldots,f_n$ of degree $d.$ Suppose that  $\gcd(f_0,\ldots,f_n)=1$ and $I_3(J(\ff))\neq 0.$ Let $F$ be  the greatest common divisor  of generators of $I_3(J(\ff)).$  Then
$$\sum_{y \in \Y_{m-1}}\deg(h_y)\leq \sum_{y \in \Y_{m-1}}\sum_{i=1}^{r_y}(2e_{i}-1)\deg(h_{i})\leq \deg(F)\leq 3(d-1),$$
where $h_y~=~h_1^{e_1}\cdots h_{r_y}^{e_{r_y}}$ is an irreducible factorization of $h_y$ in $R.$ 
\end{Theoetoile}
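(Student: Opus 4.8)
The plan is to prove the three inequalities separately; the outer two are elementary and the middle one, $\sum_{y}\sum_i (2e_i-1)\deg(h_i)\le \deg(F)$, carries all the content. For the rightmost inequality, every entry of $J(\ff)$ is a partial derivative $\partial f_i/\partial X_j$, homogeneous of degree $d-1$, so each $3$-minor is homogeneous of degree $3(d-1)$; hence $F$, being a common divisor, satisfies $\deg(F)\le 3(d-1)$. For the leftmost inequality, $\deg(h_y)=\sum_{i=1}^{r_y} e_i\deg(h_i)\le \sum_{i=1}^{r_y}(2e_i-1)\deg(h_i)$, since $e_i\ge 1$ forces $e_i\le 2e_i-1$.

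Next I would reduce the middle inequality to a divisibility statement. Across all $y\in\Y_{m-1}$ the hypersurfaces $C_i:=V(h_i)$ are distinct and irreducible (a fixed hypersurface has a single image point under $\phi$, so a given $h_i$ occurs for at most one $y$), hence the $h_i$ are pairwise coprime; it therefore suffices to prove $h_i^{2e_i-1}\mid F$ for each component and then add degrees. Fix such a $C_i$ inside the fiber over $y$. Since $J(A\ff)=A\,J(\ff)$ for $A\in GL_{n+1}(k)$, the ideal $I_3(J(\ff))$, and hence $F$, are invariant under linear changes of coordinates on $\PP_k^n$, so I may assume $y=[1:0:\cdots:0]$. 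Because $\gcd(\ff)=1$, the base locus $\VI$ has codimension $\ge 2$, so the generic point $\eta_i$ of $C_i$ lies outside $\VI$ and $\pi$ is an isomorphism there; the fiber scheme at $\eta_i$ thus coincides with $V(f_1,\dots,f_n)$ in the chart $f_0\neq 0$, whence $e_i=\min_{1\le j\le n} v_{C_i}(f_j)$, where $v_{C_i}$ is the order of vanishing along $C_i$. In particular $h_i^{e_i}\mid f_j$ in $R$ for all $1\le j\le n$. I expect this geometric identification of the scheme-theoretic fiber multiplicity $e_i$ with $\min_j v_{C_i}(f_j)$ to be the main obstacle; once it is in hand, the Jacobian estimate is short.

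The heart of the argument is then a local computation at $\eta_i$. Writing $f_j=h_i^{e_i}g_j$ for $1\le j\le n$, one has $\partial f_j/\partial X_k=h_i^{e_i-1}\bigl(e_i g_j\,\partial h_i/\partial X_k+h_i\,\partial g_j/\partial X_k\bigr)$, so every row $j\ge 1$ of $J(\ff)$ is divisible by $h_i^{e_i-1}$, and the reduced row $\tilde J_j:=h_i^{-(e_i-1)}\nabla f_j$ satisfies $\tilde J_j\equiv e_i g_j\,\nabla h_i \pmod{h_i}$. The key observation is that modulo $h_i$ all these reduced rows are scalar multiples of the single vector $\nabla h_i$. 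Now take any $3$-minor $\Delta$ with row set $S$, $|S|=3$; at most one index in $S$ equals $0$, so $S$ contains $s\ge 2$ indices from $\{1,\dots,n\}$. Factoring $h_i^{e_i-1}$ out of each of those $s$ rows gives $\Delta=h_i^{s(e_i-1)}\Delta'$, and modulo $h_i$ the corresponding $s\ge 2$ rows of $\Delta'$ are pairwise proportional, so $h_i\mid\Delta'$.

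Combining these gives $v_{C_i}(\Delta)\ge s(e_i-1)+1\ge 2(e_i-1)+1=2e_i-1$ for every $3$-minor of $J(\ff)$, hence $h_i^{2e_i-1}\mid F$. Summing over every top-dimensional component $C_i$ of every fiber $y\in\Y_{m-1}$ and invoking the pairwise coprimality of the $h_i$ yields $\sum_{y}\sum_{i}(2e_i-1)\deg(h_i)\le \deg(F)$, which together with the two elementary bounds completes the proof.
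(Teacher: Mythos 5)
Your proposal is correct, and its computational heart is the same as the paper's: you factor $h_i^{e_i-1}$ out of each row $\nabla f_j$, $j\geq 1$, of the Jacobian and then gain one extra factor of $h_i$ because, modulo $h_i$, those reduced rows are all multiples of the single vector $\nabla h_i$; the paper does precisely this with the basis $\ff^\prime=(f,h_yg_{1y},\ldots,h_yg_{ny})$, where the extra factor comes from the auxiliary matrix $N$ whose rows past the first are multiples of $\sigma$, hence $\rank N\le 2$ and its $3$-minors vanish. Where you genuinely diverge is in the scaffolding that produces the divisibility $h_i^{e_i}\mid f_j$. The paper routes through the symmetric algebra: Lemma~\ref{Lemma2.0} matches the $(m-1)$-dimensional parts of the fibers of $\pi$ and $\pi^\prime$, Lemma~\ref{Lemma2.3} (whose proof is deferred to Botbol--Bus\'e--Chardin) gives the normalization $I=(f)+h_y(g_{1y},\ldots,g_{ny})$ with $h_y=\gcd(\ell_0(\ff),\ldots,\ell_n(\ff))$, and Lemma~\ref{Lemma2.4} transfers the conclusion back to $F$. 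You instead stay on the graph $\Gamma$: after a $GL_{n+1}(k)$ change of coordinates on $\PP^n_k$ (your substitute for Lemma~\ref{Lemma2.4}, legitimate because the gcd of any generating set of the ideal $I_3(J(\ff))$ is the same), you identify $e_i=\min_{1\le j\le n}v_{C_i}(f_j)$ by a local computation at the generic point $\eta_i$, which lies in $\Omega_\phi\cap\{f_0\neq 0\}$ since $\codim\VI\ge 2$ and $\phi(\eta_i)=y$. This makes the argument self-contained (no symmetric algebra, no citation for the structure lemma), at the cost of doing the fiber-multiplicity bookkeeping by hand; the paper's Lemma~\ref{Lemma2.3} yields the finer gcd description of the whole fiber at once, which is what allows its single computation \eqref{equation2.2}--\eqref{equation2.3} to treat all irreducible factors of $h_y$ simultaneously, whereas you argue one factor $h_i$ at a time and then assemble via pairwise coprimality --- an equivalent, slightly more granular organization. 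Two details you correctly handle but should keep explicit in a final write-up: minors avoiding the row of $f_0$ give the even better exponent $3e_i-2\ge 2e_i-1$, and the proportionality argument survives when $e_i\equiv 0$ in characteristic $p$ (the reduced rows are then $\equiv 0$ modulo $h_i$, which only helps), so no characteristic hypothesis sneaks in.
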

If  the field $k$ is of characteristic zero, then the assumption $I_3(J(\ff))\neq 0$ is always satisfied, due to the hypothesis that $\phi$ is generically finite onto its image. Therefore, the above theorem is a significant  improvement and  generalization of  the results in \cite{QHTran17}, at least in the case where $k$ is of characteristic zero.

\smallskip
For a parameterization of surfaces $\phi: \PP_k^2 \dashrightarrow \PP_k^3,$  we give a linear bound for $\sum_{y\in \Y_1}\deg(h_y)$ which answers affirmatively  Question~11 in \cite{QHTran17}. 
\begin{Corollaryetoile}[Corollary~\ref{Corollary4.4}]
Let $I$ be a homogeneous ideal of $R=k[X_0,\ldots,X_m]$ generated by a minimal generating set of homogeneous polynomials $\ff:=f_0,\ldots,f_3$ of degree $d.$ Suppose that  $I$ has codimension 2. Assume further  that the characteristic of $k$ does not divide $d$ and $[k(\ff):k(\XX)]$ is separable.  Then
$$\sum_{y\in \Y_1}\deg(h_y)\leq 3(d-1)-\indeg(\Syz(I))< 3(d-1).$$
\end{Corollaryetoile}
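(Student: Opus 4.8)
The plan is to combine Theorem~\ref{Theorem2.5} with a single, explicit syzygy of $I$ extracted from the maximal minors of the Jacobian matrix. Here $m=2$ and $n=3$, so $\Y_{m-1}=\Y_1$ and $J(\ff)$ is the $4\times 3$ matrix whose four $3$-minors $\Delta_0,\dots,\Delta_3$ are homogeneous of degree $3(d-1)$. To apply the main theorem I first need $I_3(J(\ff))\neq 0$. Since $\phi$ is generically finite onto its image, that image is a surface in $\PP_k^3$; hence its affine cone is three-dimensional and $\operatorname{trdeg}_k k(\ff)=3=\operatorname{trdeg}_k k(\XX)$, so $k(\ff)\subseteq k(\XX)$ is a finite extension. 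The separability hypothesis is exactly the input the Jacobian criterion needs to conclude that the rank of $J(\ff)$ over $\Frac(R)$ equals $\operatorname{trdeg}_k k(\ff)=3$ (this is the positive-characteristic analogue of the characteristic-zero remark following Theorem~\ref{Theorem2.5}). As $J(\ff)$ has three columns, some $\Delta_i\neq 0$, so $I_3(J(\ff))\neq 0$ and Theorem~\ref{Theorem2.5} gives $\sum_{y\in\Y_1}\deg(h_y)\leq\deg(F)$, where $F=\gcd(\Delta_0,\dots,\Delta_3)$.

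It remains to prove $\deg(F)\leq 3(d-1)-\indeg(\Syz(I))$. The key is the cofactor identity: the vector of signed maximal minors $(\Delta_0,-\Delta_1,\Delta_2,-\Delta_3)$ lies in the left kernel of $J(\ff)$, because pairing it with any column of $J(\ff)$ is the Laplace expansion of a $4\times 4$ determinant with a repeated column, hence zero. Factoring out $F$, write $\Delta_i=F\,b_i$ with $\gcd(b_0,\dots,b_3)=1$ and each $b_i$ homogeneous of degree $3(d-1)-\deg(F)$; the primitive vector $(b_0,-b_1,b_2,-b_3)$ still annihilates every column of $J(\ff)$, that is, $\sum_i(-1)^i b_i\,\partial f_i/\partial X_j=0$ for $j=0,1,2$. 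Multiplying the $j$th relation by $X_j$, summing over $j$, and invoking Euler's identity $\sum_j X_j\,\partial f_i/\partial X_j=d\,f_i$ turns this into $d\sum_i(-1)^i b_i f_i=0$; as $\chara(k)\nmid d$ we obtain $\sum_i(-1)^i b_i f_i=0$. Thus $(b_0,-b_1,b_2,-b_3)$ is a nonzero homogeneous syzygy of $I$ of degree $3(d-1)-\deg(F)$, whence $\indeg(\Syz(I))\leq 3(d-1)-\deg(F)$, which is the inequality sought.

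The strict inequality $3(d-1)-\indeg(\Syz(I))<3(d-1)$ is then immediate from $\indeg(\Syz(I))\geq 1$: a syzygy of degree $0$ would be a nontrivial $k$-linear relation among $f_0,\dots,f_3$, contradicting that they form a minimal generating set. I expect the main obstacle to be the first step---pinning down the precise form of the Jacobian criterion in positive characteristic so that the separability hypothesis really yields $\rank J(\ff)=3$ and hence $I_3(J(\ff))\neq 0$---while the second, arithmetic step is delicate only in that the passage from ``left kernel of $J(\ff)$'' to ``genuine syzygy of $I$'' relies essentially on Euler's relation together with $\chara(k)\nmid d$.
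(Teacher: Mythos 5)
Your proof is correct and follows essentially the same route as the paper: the paper's Proposition~\ref{Proposition2.9} is exactly your cofactor-plus-Euler argument (the signed $3$-minors $D_i$ satisfy $\sum_i D_i\,\partial f_i/\partial X_j=0$, and Euler's formula with $p\nmid d$ turns $D_i/F$ into a syzygy of degree $3(d-1)-\deg(F)$), while your use of the Jacobian criterion to get $I_3(J(\ff))\neq 0$ from separability matches the paper's remark following Theorem~\ref{Theorem2.5}, and your minimality argument for $\indeg(\Syz(I))\geq 1$ is the paper's Corollary~\ref{Corollary2.10}. No gaps; the only cosmetic difference is that you divide by $F$ before applying Euler's identity rather than after.
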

Observe that the last two conditions in the above corollary are automatically satisfied if $k$ is of characteristic zero.

\section{Tangent space maps and Jacobian matrices}
Suppose that $X$ is a $k$-scheme where $k$ is an algebraically closed field, and $q\in X$ is a closed point. The tangent space $T(X)_q$ of $X$ at $q$ is the $k$-vector space 
$$T(X)_q={\rm Hom}_k(m_q/m_q^2,k)$$
 where $m_q$ is the maximal ideal of $\mathcal O_{X,q}$.
Suppose that $Y$ is another $k$-scheme and $\phi:X\rightarrow Y$ is a morphism of $k$-schemes.
Then $\phi^*:\mathcal O_{Y,\phi(q)}\rightarrow \mathcal O_{X,q}$ induces a homomorphism of $k$-vector spaces 
$d\phi_q:T(X)_q\rightarrow T(Y)_{\phi(q)}$. If $V$ is a subscheme of $X$ and $W$ is a subscheme of $Y$ such that $\phi(V)\subset W$, then we have a natural commutative diagram of homomorphisms of $k$-vector spaces
\begin{equation}\label{eq1}
\begin{array}{ccc}
T(V)_q&\subset&T(X)_q\\
\downarrow&&\downarrow\\
T(W)_{\phi(q)}&\subset&T(Y)_{\phi(q)}.
\end{array}
\end{equation}

From now on, we will consider the following situation. Suppose that $k$ is an algebraically closed field of characteristic $p\geq 0$. Consider  $f_0,\ldots,f_n$ homogeneous polynomials of a common degree $d$ in the standard polynomial ring $R:=k[X_0,\ldots,X_m],$ such that $\gcd(f_0,\ldots,f_n)=1$. Let $\phi:\PP^m_k\dashrightarrow \PP^n_k$ be a rational map defined by $f_0,\ldots,f_n.$   The maximal open set on which $\phi$ is a morphism is $\Omega_\phi=\PP_k^m\setminus Z(f_0,\ldots,f_n)$. Let 
$$
J(\ff)=\left(\begin{array}{ccc}
\frac{\partial f_0}{\partial X_0}&\cdots& \frac{\partial f_0}{\partial X_m}\\
\vdots & &\vdots\\
\frac{\partial f_n}{\partial X_0}&\cdots&\frac{\partial f_n}{\partial X_m}
\end{array}
\right)
$$
be the Jacobian matrix of $\ff=f_0,\ldots,f_n.$ For any closed point $q=(q_0:\cdots: q_m)\in \PP_k^m$, we denote by $J(q)$ the matrix obtained from $J(\ff)$ by mapping $X_i$ to $q_i$ for all $i=0,\ldots,m.$
The entries of this matrix are defined by $q$ up to multiplication by a common non zero scalar.
\begin{pro} \label{Lemma1} 
Suppose that $p$ does not divide $d$ and $q\in \Omega_\phi$ is a closed point. Then
$$
\mbox{rank }J(q)=\mbox{rank }d\phi_q+1,
$$
where $d\phi_q:T(\PP_k^m)_q\rightarrow T(\PP_k^n)_{\phi(q)}$ is the tangent space map.
\end{pro}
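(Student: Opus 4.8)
The plan is to realize $J(q)$ as the differential of the affine cone map and then descend to projective space, where Euler's identity accounts for exactly the one extra dimension. Concretely, let $\widetilde\phi\colon\mathbb{A}^{m+1}_k\to\mathbb{A}^{n+1}_k$ be the polynomial map $x\mapsto(f_0(x),\dots,f_n(x))$, so that its differential at a chosen affine lift of $q$ (still denoted $q$) is exactly the evaluated Jacobian, $d\widetilde\phi_q=J(q)$ as a linear map $k^{m+1}\to k^{n+1}$. Let $\pi\colon\mathbb{A}^{m+1}_k\setminus\{0\}\to\PP^m_k$ and $\pi'\colon\mathbb{A}^{n+1}_k\setminus\{0\}\to\PP^n_k$ be the canonical projections. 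On the locus where everything is defined one has $\pi'\circ\widetilde\phi=\phi\circ\pi$, so the chain rule gives the commutative square
\begin{equation*}
\begin{array}{ccc}
k^{m+1} & \xrightarrow{\,J(q)\,} & k^{n+1}\\
\downarrow & & \downarrow\\
T(\PP^m_k)_q & \xrightarrow{\,d\phi_q\,} & T(\PP^n_k)_{\phi(q)},
\end{array}
\end{equation*}
where the vertical maps are $d\pi_q$ and $d\pi'_{\widetilde\phi(q)}$. First I would recall the standard fact that, $\pi$ being smooth, $d\pi_q$ is surjective with kernel the radial line $\langle q\rangle\subset k^{m+1}$, and likewise $\Ker d\pi'_{\widetilde\phi(q)}=\langle\ff(q)\rangle$; both lines are genuinely one-dimensional since $q\neq 0$ and $\ff(q)\neq 0$, the latter because $q\in\Omega_\phi$.

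The key input is Euler's identity: since each $f_i$ is homogeneous of degree $d$, one has $\sum_j X_j\,\partial f_i/\partial X_j=d\,f_i$, which evaluated at $q$ reads $J(q)\cdot q=d\,\ff(q)$. Because $p\nmid d$, the scalar $d$ is invertible in $k$, so $J(q)$ carries the radial line $\langle q\rangle$ \emph{isomorphically} onto the radial line $\langle\ff(q)\rangle$; in particular $q\notin\Ker J(q)$ and $\ff(q)\in\Im J(q)$. This is precisely where the hypothesis $p\nmid d$ enters, and it is what guarantees that the radial direction does not collapse under $J(q)$.

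To finish, I would read off $d\phi_q$ as the map induced by $J(q)$ on the quotients $k^{m+1}/\langle q\rangle\cong T(\PP^m_k)_q$ and $k^{n+1}/\langle\ff(q)\rangle\cong T(\PP^n_k)_{\phi(q)}$; this is legitimate because $J(q)\langle q\rangle\subseteq\langle\ff(q)\rangle$, and it genuinely computes $d\phi_q$ by the commutative square together with the surjectivity of $d\pi_q$. The image of the induced map is $(\Im J(q)+\langle\ff(q)\rangle)/\langle\ff(q)\rangle$, and since $\ff(q)\in\Im J(q)$ this is just $\Im J(q)/\langle\ff(q)\rangle$, of dimension $\rank J(q)-1$. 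Hence $\rank d\phi_q=\rank J(q)-1$, which is the claim. The only real obstacle is the careful identification of the projective tangent spaces with these affine quotients and the verification that the induced map is indeed $d\phi_q$; once the commutative square is in place the rank bookkeeping is immediate, and the characteristic hypothesis does all the essential work through Euler's relation.
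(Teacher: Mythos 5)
Your proof is correct, and it reaches the result by a genuinely different route than the paper. The paper argues in coordinates: after linear changes of coordinates moving $q$ and $\phi(q)$ to $(1:0:\cdots:0)$, Euler's formula shows that the first column of $J(q)$ has $d f_0(\alpha)\neq 0$ as its only nonzero entry, so $\rank J(q)$ is one more than the rank of the remaining block; that block is then identified, via the dehomogenizations $F_i=f_i/X_0^d$ and the quotient rule, with the Jacobian of the affine expression $g_i=F_i/F_0$ of $\phi$, which computes $d\phi_q$ in charts. You instead work invariantly with the cone map $\widetilde\phi:\mathbb{A}^{m+1}_k\to\mathbb{A}^{n+1}_k$ and the projections to $\PP^m_k$, $\PP^n_k$, realizing $d\phi_q$ as the map induced by $J(q)$ on the quotients $k^{m+1}/\langle q\rangle\to k^{n+1}/\langle \ff(q)\rangle$; Euler's identity in the form $J(q)\cdot q=d\,\ff(q)$, together with $p\nmid d$, shows the radial line is carried isomorphically onto the radial line, and the rank bookkeeping gives $\rank d\phi_q=\rank J(q)-1$. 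Both arguments pivot on Euler's formula at exactly the same point---it is what prevents the radial direction from collapsing---but yours avoids coordinate normalization and chart computations, makes the role of the hypothesis $p\nmid d$ maximally transparent, and yields Remark~\ref{Remark2} for free: when $p\mid d$ one has $J(q)\cdot q=0$, the induced quotient map still exists and still equals $d\phi_q$, and its rank is $\rank J(q)-1$ or $\rank J(q)$ according to whether $\ff(q)$ lies in $\Im J(q)$ or not. The cost is the appeal to two standard (chart-checkable) facts---that $d\pi_q$ is surjective with kernel $\langle q\rangle$, and that the map induced on quotients by the commutative square is indeed $d\phi_q$---whereas the paper's argument is self-contained matrix algebra.
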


\begin{proof} 
After possibly making linear changes of homogeneous coordinates  in $\PP_k^m$ and $\PP_k^n$, we may assume that $q=(1:0:\cdots:0)$ and $\phi(q)=(1:0:\cdots:0)$. Let $\overline X_i=\frac{X_i}{X_0}$ for $1\le i\le m$. Let $F_i=\frac{f_i}{X_0^d}\in k[\overline X_1,\ldots,\overline X_m]$, which is the affine coordinate ring of $\PP_k^m\setminus Z(X_0)$.
As $\phi$ is a regular map near $q,$ 
$$
\phi=(f_0:f_1:\cdots:f_n)=(1:g_1:\cdots:g_n),
$$
where $g_i=\frac{f_i}{f_0}=\frac{F_i}{F_0}$. Let $\alpha=(1,0,\ldots,0)$.  We have that 
$$
\frac{\partial f_j}{\partial X_0}(\alpha)=df_j(\alpha)
$$
for all $j=0,\ldots,n,$ by Euler's formula. Thus
$$
\begin{array}{lll}
\mbox{rank }J(p)&=&\mbox{rank }\left(\begin{array}{cccc}
df_0(\alpha)&\frac{\partial f_0}{\partial X_1}(\alpha)&\cdots&\frac{\partial f_0}{\partial X_m}(\alpha)\\
df_1(\alpha)&\frac{\partial f_1}{\partial X_1}(\alpha)&\cdots&\frac{\partial f_1}{\partial X_m}(\alpha)\\
\vdots&\vdots&&\vdots\\
df_n(\alpha)&\frac{\partial f_n}{\partial X_1}(\alpha)&\cdots&\frac{\partial f_n}{\partial X_m}(\alpha)
\end{array}\right)\\
\\
	
&=&\mbox{rank}\left(\begin{array}{cccc}
df_0(\alpha)&\frac{\partial f_0}{\partial X_1}(\alpha)&\cdots&\frac{\partial f_0}{\partial X_m}(\alpha)\\
0&\frac{\partial f_1}{\partial X_1}(\alpha)&\cdots&\frac{\partial f_1}{\partial X_m}(\alpha)\\
\vdots&\vdots&&\vdots\\
0&\frac{\partial f_n}{\partial X_1}(\alpha)&\cdots&\frac{\partial f_n}{\partial X_m}(\alpha)
\end{array}\right)\\
\\
	
&=&\mbox{rank}\left(\begin{array}{ccc}
\frac{\partial f_1}{\partial X_1}(\alpha)&\cdots&\frac{\partial f_1}{\partial X_m}(\alpha)\\
\vdots&&\vdots\\
\frac{\partial f_n}{\partial X_1}(\alpha)&\cdots&\frac{\partial f_n}{\partial X_m}(\alpha)
\end{array}\right)+1
\end{array}
$$

Let $\overline\alpha=(0,\ldots,0)$. As
$$
\frac{\partial f_i}{\partial X_j}(\alpha)=\frac{\partial F_i}{\partial\overline X_j}(\overline\alpha)
$$
for $1\le i\le n$ and $1\le j\le m$ (it suffices to check this on a monomial), so 
$$
\begin{array}{lll}
\mbox{rank}\left(\begin{array}{ccc}
\frac{\partial f_1}{\partial X_1}(\alpha)&\cdots&\frac{\partial f_1}{\partial X_m}(\alpha)\\
\vdots&&\vdots\\
\frac{\partial f_n}{\partial X_1}(\alpha)&\cdots&\frac{\partial f_n}{\partial X_m}(\alpha)
\end{array}\right)
&=&
\mbox{rank}\left(\begin{array}{ccc}
\frac{\partial F_1}{\partial \overline X_1}(\overline\alpha)&\cdots&\frac{\partial F_1}{\partial \overline X_m}(\overline\alpha)\\
\vdots&&\vdots\\
\frac{\partial F_n}{\partial \overline X_1}(\overline\alpha)&\cdots&\frac{\partial F_n}{\partial \overline X_m}(\overline\alpha)
\end{array}\right)\\
\\
	
&=&\mbox{rank}\left(\begin{array}{ccc}
\frac{\partial g_1}{\partial \overline X_1}(\overline\alpha)&\cdots&\frac{\partial g_1}{\partial \overline X_m}(\overline\alpha)\\
\vdots&&\vdots\\
\frac{\partial g_n}{\partial \overline X_1}(\overline\alpha)&\cdots&\frac{\partial g_n}{\partial \overline X_m}(\overline\alpha)
\end{array}\right)

\end{array}
$$
since
$$
\frac{\partial}{\partial \overline X_j}\left(\frac{F_i}{F_0}\right)=\frac{\partial F_i}{\partial \overline X_j}F_0^{-1}-F_0^{-2}\frac{\partial F_0}{\partial \overline X_j}F_i,
$$
so
$$
\frac{\partial g_i}{\partial \overline X_j}(\overline\alpha)=\frac{1}{F_0(\overline\alpha)}\frac{\partial F_i}{\partial \overline X_j}(\overline\alpha)
$$
for $1\le i\le n$ and $1\le j\le m$, as $F_0(\overline\alpha)\ne 0$ and $F_i(\overline\alpha)=0$ for $1\le i\le n$.
\end{proof}

\begin{Rem}\label{Remark2} 
If $p$ divides $d$ the proof of Proposition~\ref{Lemma1} shows that we can either have 
$$
\mbox{rank }J(q)=\mbox{rank }d\phi_q+1\mbox{ or }\mbox{rank }J(q)=\mbox{rank }d\phi_q.
$$
Both options are possible.
\end{Rem}
\begin{pro}\label{Proposition1.3} 
Suppose that $r\in \N$ and $V$ is a subvariety of $\PP_k^m$ such that $V\cap \Omega_\phi\neq \emptyset$ and $r=\dim V-\dim \phi(V)$. Then $V\subset Z(I_{m-r+2}(J(\ff)))$, where $I_{m-r+2}(J(\ff))$ is the ideal generated by the $(m-r+2)$-minors of $J(\ff)$.
\end{pro}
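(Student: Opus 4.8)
The plan is to reduce the containment to a pointwise rank estimate for the Jacobian: I will show that at a general closed point $q$ of $V$ one has $\rank J(q)\le m-r+1$, so that every $(m-r+2)$-minor of $J(\ff)$ vanishes at $q$. Since these minors are homogeneous polynomials and the set of such $q$ is dense in $V$, they must vanish on all of $V$, which is precisely the assertion $V\subset Z(I_{m-r+2}(J(\ff)))$. Throughout I treat $V$ as an irreducible variety, so that $\phi(V):=\overline{\phi(V\cap\Omega_\phi)}$ is a variety and the dimension $r=\dim V-\dim\phi(V)$ makes sense.

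First I would work on the dense open set $U=V\cap\Omega_\phi$ (nonempty by hypothesis), on which $\phi$ is a morphism, and view $\phi|_U\colon U\to\phi(V)$ as a dominant morphism of varieties. By the theorem on the dimension of the fibers of a dominant morphism, there is a dense open subset $W\subset\phi(V)$ such that the fiber over every point of $W$ has pure dimension $\dim U-\dim\phi(V)=r$. Its preimage $U'=(\phi|_U)^{-1}(W)$ is dense and open in $V$, and for each closed point $q\in U'$ the fiber $F_q:=(\phi|_U)^{-1}(\phi(q))$ satisfies $\dim_q F_q=r$.

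Next, fix such a $q$ and study the tangent map $d\phi_q\colon T(\PP_k^m)_q\to T(\PP_k^n)_{\phi(q)}$. Applying the commutative diagram \eqref{eq1} to the inclusions $F_q\subset\PP_k^m$ and $\{\phi(q)\}\subset\PP_k^n$ (with $\phi(F_q)=\{\phi(q)\}$ a reduced point, so $T(\{\phi(q)\})_{\phi(q)}=0$), the composite $T(F_q)_q\hookrightarrow T(\PP_k^m)_q\xrightarrow{d\phi_q}T(\PP_k^n)_{\phi(q)}$ is zero; hence $T(F_q)_q\subset\Ker d\phi_q$. Since $\dim_k T(F_q)_q\ge\dim_q F_q=r$, we get $\dim_k\Ker d\phi_q\ge r$, and as $\dim_k T(\PP_k^m)_q=m$ this yields $\rank d\phi_q\le m-r$. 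Combining Proposition~\ref{Lemma1} with Remark~\ref{Remark2} gives $\rank J(q)\le\rank d\phi_q+1$ in every characteristic, so $\rank J(q)\le m-r+1<m-r+2$, and therefore all $(m-r+2)$-minors of $J(q)$ vanish.

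Finally, this holds for every $q$ in the dense open set $U'\subset V$, so each $(m-r+2)$-minor of $J(\ff)$ vanishes on a dense subset of $V$, hence on the irreducible $V$, giving $V\subset Z(I_{m-r+2}(J(\ff)))$. I expect the only delicate points to be the passage $T(F_q)_q\subset\Ker d\phi_q$ together with the inequality $\dim_k T(F_q)_q\ge\dim_q F_q$, i.e. invoking the generic fiber-dimension theorem for the possibly inseparable dominant morphism $\phi|_U$ and the general bound relating the dimension of a variety to that of its Zariski tangent space. The characteristic-$p$ subtlety causes no trouble here: it is absorbed entirely into the inequality $\rank J(q)\le\rank d\phi_q+1$ supplied by Remark~\ref{Remark2}, which is all that this argument requires.
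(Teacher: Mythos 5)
Your proof is correct, and while it shares the paper's outer skeleton, it reaches the crucial estimate $\rank\, d\phi_q\le m-r$ by a genuinely different mechanism. Both arguments produce a dense subset of $V$ of closed points $q$ at which $\rank\, d\phi_q\le m-r$, then apply Proposition~\ref{Lemma1} together with Remark~\ref{Remark2} to get $\rank\, J(q)\le \rank\, d\phi_q+1\le m-r+1$, and conclude by taking closures. The paper obtains the rank bound from generic smoothness: it takes $U$ to consist of smooth points of $V$ lying over smooth points of $\phi(V)$, so that $\dim T(V)_q=\dim V$ and $\dim T(\phi(V))_{\phi(q)}=\dim V-r$, and applies diagram \eqref{eq1} to the pair $V\subset \PP_k^m$, $\phi(V)\subset \PP_k^n$; the restriction $T(V)_q\to T(\phi(V))_{\phi(q)}$ then has kernel of dimension at least $r$, which forces $\dim \Ker\, d\phi_q\ge r$. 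You instead apply diagram \eqref{eq1} to the pair $F_q\subset \PP_k^m$, $\{\phi(q)\}\subset \PP_k^n$, so that the entire tangent space $T(F_q)_q$ sits inside $\Ker\, d\phi_q$, and you bound its dimension below by $\dim_q F_q\ge r$ via the fiber-dimension theorem and Krull's inequality $\dim_k T(F_q)_q\ge \dim \mathcal{O}_{F_q,q}$. Your route avoids all smoothness considerations (no generic smoothness of $V$ or of $\phi(V)$ is needed, only dimension theory), at the cost of invoking the theorem on dimensions of fibers; note, moreover, that the lower bound $\dim_q F_q\ge r$ already holds for \emph{every} $q\in U$, since every component of every fiber of a dominant morphism of irreducible varieties has dimension at least $\dim V-\dim \phi(V)$, so your passage to the smaller open set $U'$ where fibers are pure of dimension $r$ is harmless but not actually necessary. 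Both proofs treat characteristic $p$ identically, through the one-sided inequality $\rank\, J(q)\le \rank\, d\phi_q+1$ supplied by Remark~\ref{Remark2}.
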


\begin{proof} 
There exists a dense open subset $U$ of $V$ such that for any  $q\in U$, $V$ is smooth at $q$  and $\phi(V)$ is smooth at $\phi(q)$ (take $U$ to be the intersection of smooth locus of $V$ with the preimage of the smooth locus of $\phi(V)$). We have that $\dim T(\phi(V))_{\phi(q)}=\dim V-r$ for $q\in U$, so by consideration of diagram \eqref{eq1}, it follows  that $\dim \Ker \ d\phi_q\ge r$ for $q\in U$,
hence $\rank\ d\phi_q\le m-r$ for $q\in U$. By Proposition~\ref{Lemma1} and Remark~\ref{Remark2},  we have that
$$
\rank \,J(q)\leq  \rank \,d\phi_q+1\le m-r+1
$$
for $q\in U$. Thus $U$ is contained in the closed set $Z(I_{m-r+2}(J(\ff)))$, so the closure $V$ of $U$ is contained in this set.
\end{proof}

\section{Bound for the number of $(m-1)$-dimensional fibers of a rational map $\phi: \PP_k^m \dashrightarrow \PP_k^n$ }
We adopt in this section the notations and  hypotheses of the introduction. 
Recall that  \begin{align*}
\phi:\quad &\PP_k^m  -\dashrightarrow \PP_k^n\\
& \; x \longmapsto (f_0(x):\cdots: f_n(x))
\end{align*}
is a rational map whose closed image is a subvariety $\mathscr{S}$ in $\PP_k^n$ and $\Gamma\subset \PP_k^m\times \PP_k^n$ is the closure of its graph. We have the following diagram
$$ \xymatrix@1{\Gamma\ \ar[d]_{\pi_1}\ar@{^(->}[r] & \PP_k^m\times \PP_k^n \ar[d]^{\pi_2}\\  \PP^m_k \ar@{-->}[r]_\phi& \PP^n_k.}$$

Furthermore, $\Gamma$ is the irreducible subscheme of $\PP_k^m\times \PP_k^n$ defined by the Rees algebra $\RI:=\Rees_R(I)$ (see \cite[Chapter II, \S 7]{Hartshorne77}).  Let $B:=k[T_0,\ldots,T_n]$ be  the homogeneous coordinate ring of $\PP_k^n$ and  $S:=R\otimes_k B= R[T_0,\ldots,T_n]$ with the standard bigraded structure by the canonical grading $\deg(X_i)=(1,0)$ and $\deg(T_j)=(0,1)$ for all $i=0,\ldots,m$ and $j=0,\ldots,n$. The natural bigraded morphism of bigraded $k$-algebras
\begin{align*}
\alpha:\quad & S \longrightarrow  \RI=\oplus_{s\geq 0} I(d)^s=\oplus_{s\geq 0} I^s(sd)\\
&  T_i \longmapsto f_i
\end{align*}
is onto and corresponds to the embedding $\Gamma \subset \PP_k^m\times \PP_k^n$.

Let $\mathfrak{P}$ be the kernel of $\alpha$. Then it is a homogeneous ideal of $S$ and the part of degree one of  $\mathfrak{P}$ in $T_i$, denoted by  $\mathfrak{P}_1=\mathfrak{P}_{(\ast, 1)},$  is the module of syzygies of the  $f_i$
$$a_0T_0 +\cdots+a_nT_n \in \mathfrak{P}_1\Longleftrightarrow a_0f_0+\cdots+a_nf_n=0.$$

Set $\SI:=\Sym_R(I)$ for the symmetric algebra of $I$. The  natural bigraded epimorphisms
\begin{align*}
S\longrightarrow S/(\mathfrak{P}_1)\simeq \SI \text{\quad and\quad} \SI\simeq S/(\mathfrak{P}_1)\longrightarrow S/\mathfrak{P}\simeq \RI
\end{align*}
correspond to the embeddings of schemes $ \Gamma \subset V\subset \PP_k^m\times \PP_k^n$
where $V$ is the projective scheme defined by $\SI$.

As the construction of symmetric algebras and Rees algebras commute with
localization, and both algebras are the quotient of a polynomial extension of the
base ring by the Koszul syzygies on a minimal set of generators in the case of a
complete intersection ideal, it follows that $\Gamma$ and $V$ coincide on $(\PP_k^m\setminus X) \times \PP_k^n,$ where $X$ is the (possibly empty) set of points where $\VI$ is not locally a complete intersection.

Now we set $\pi:={\pi_2}_{\mid \Gamma}: \Gamma \longrightarrow \PP_k^n.$ For every closed point $y\in \PP_k^n,$ we will denote by $k(y)$ its residue field, that is, $k(y)=(B_\point/\point B_\point)_0,$ where $\point$ is the defining prime ideal of $y.$ As $k$ is algebraically closed, $k(y)\simeq k.$ The fiber of $\pi$  at  $y\in \PP_k^n$ is the subscheme
\begin{align*}
\pi^{-1}(y)=\Proj(\RI\otimes_B k(y)) \subset \PP_{k(y)}^m\simeq \PP_k^m.
\end{align*}

Let $0\leq \ell\leq m.$ We define 
$$\Y_\ell=\{y\in \PP_k^n \mid \dim  \pi^{-1}(y)=\ell\}\subset \PP_k^n.$$

We are interested in studying the structure of $\Y_\ell.$ First, Chevalley's theorem shows  that the subsets $\Y_\ell$ are constructible, that is, they can be written as 
$$\Y_\ell= \bigsqcup_{i=1}^s (U_i\cap Z_i),$$
where $U_i$ (respectively $Z_i$) are  open (respectively closed) subsets of $\PP_k^n.$

\begin{Lem} \label{Lemma2.1}
Let $\phi: \PP_k^m\dashrightarrow \PP_k^n$ be a rational map and $\Gamma$ be the closure of the graph of $\phi.$ Consider the canonical projection $\pi: \Gamma\longrightarrow \PP_k^n$. Then
$$\dim \ \overline{\Y_\ell}+\ell \leq m.$$
Furthermore, this inequality is strict for any $\ell >  m-\dim \ \mathscr{S},$ where  $\mathscr{S}$ is  the closed image of $\phi.$
\end{Lem}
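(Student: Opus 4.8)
The plan is to deduce both assertions from the classical theorem on the dimension of the fibers of a dominant morphism, applied to $\pi$. First recall the numerics: since $\pi_1\colon\Gamma\to\PP_k^m$ is birational, $\Gamma$ is irreducible of dimension $m$, and its image $\mathscr{S}=\overline{\pi(\Gamma)}$ is irreducible of dimension $s:=\dim\mathscr{S}$, with $\pi\colon\Gamma\to\mathscr{S}$ dominant. The theorem on fiber dimension then tells us that every nonempty fiber of $\pi$ has dimension at least $m-s$, that the generic fiber has dimension exactly $m-s$, and that $\dim\pi^{-1}(y)$ can only jump up on closed subsets. The idea is to bound $\dim\overline{\Y_\ell}$ by exhibiting, over a dense open subset of a top-dimensional component of $\overline{\Y_\ell}$, a family of genuinely $\ell$-dimensional fibers, and checking that such a family must occupy dimension $\dim\overline{\Y_\ell}+\ell$ inside $\Gamma$.

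For the first inequality I would argue as follows. Let $V$ be an irreducible component of $\overline{\Y_\ell}$ with $\dim V=\dim\overline{\Y_\ell}$. Since $\Y_\ell$ is constructible (Chevalley, as recalled above) and dense in $\overline{\Y_\ell}$, the set $\Y_\ell\cap V$ contains a dense open subset $U$ of $V$, and by definition every $y\in U$ satisfies $\dim\pi^{-1}(y)=\ell$. Put $P:=\pi^{-1}(U)\subseteq\Gamma$, so that $\pi|_P\colon P\to U$ is surjective with every fiber equal to $\pi^{-1}(y)$ of dimension exactly $\ell$. The key step is the lower bound $\dim P\ge\dim U+\ell$: writing $P$ as the union of its irreducible components, for a general $y\in U$ only the components dominating $U$ meet $\pi^{-1}(y)$, and for such a component $P'$ the fiber $P'\cap\pi^{-1}(y)$ has dimension $\dim P'-\dim U$; since the maximum of these dimensions equals $\dim\pi^{-1}(y)=\ell$, some dominating $P'$ satisfies $\dim P'\ge\dim U+\ell$. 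As $P\subseteq\Gamma$ forces $\dim P\le m$, we obtain $\dim\overline{\Y_\ell}+\ell=\dim U+\ell\le\dim P\le m$.

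For the strict inequality, suppose that equality $\dim\overline{\Y_\ell}+\ell=m$ held. Then the chain above forces $\dim P=m=\dim\Gamma$, so the locally closed set $P$ is dense in the irreducible variety $\Gamma$. Continuity of $\pi$ then yields $\mathscr{S}=\overline{\pi(\Gamma)}=\overline{\pi(\overline{P})}\subseteq\overline{\pi(P)}\subseteq\overline{U}=V\subseteq\overline{\Y_\ell}\subseteq\mathscr{S}$, whence $V=\mathscr{S}$ and $\dim\overline{\Y_\ell}=s$. The assumed equality now reads $s+\ell=m$, i.e. $\ell=m-\dim\mathscr{S}$, contradicting the hypothesis $\ell>m-\dim\mathscr{S}$. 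Hence the inequality is strict in that range. (Consistently, the theorem on fiber dimension also gives $\Y_\ell=\varnothing$ for $\ell<m-\dim\mathscr{S}$, so only $\ell=m-\dim\mathscr{S}$ can produce equality.)

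The main obstacle is precisely the lower bound $\dim P\ge\dim U+\ell$, that is, ensuring the $\ell$-dimensional fibers genuinely contribute their full dimension to $\dim P$ rather than being \emph{absorbed} into low-dimensional components of $P$ that map finitely to $U$. This is what the component-by-component analysis rules out, and it is the one place where one really uses that the fibers over $U$ have dimension exactly $\ell$ together with the fiber-dimension formula for each dominating component. The remaining ingredients — the irreducibility and dimensions of $\Gamma$ and $\mathscr{S}$, the constructibility of $\Y_\ell$, and the density and closure manipulations — are routine.
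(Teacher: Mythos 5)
Your proof is correct and takes essentially the same route as the paper: both bound $\dim\,\overline{\Y_\ell}+\ell$ by the dimension of the preimage of $\Y_\ell$ inside the irreducible $m$-dimensional variety $\Gamma$, and both obtain strictness by observing that equality forces this preimage to be dense in $\Gamma$, whence $\overline{\Y_\ell}=\mathscr{S}$ and $\ell=m-\dim\,\mathscr{S}$. The only difference is one of detail: you prove, component by component, the key fiber-dimension inequality $\dim \pi^{-1}(U)\geq \dim U+\ell$, which the paper simply asserts in the form $\dim\,\overline{\pi^{-1}(\Y_\ell)}=\dim\,\overline{\Y_\ell}+\ell$.
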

\begin{proof}
Set $V_\ell:=\overline{\pi^{-1}(\Y_\ell)},$ a subvariety of $\Gamma.$ For the first statement
$$\dim \ \overline{\Y_\ell }+\ell= \dim \ V_\ell\leq \dim\ \Gamma =\dim\ \mathscr{S}\leq m.$$
Moreover, if $\dim \ \overline{\Y_\ell}+\ell =m,$ then $ \dim \ V_\ell =\dim\ \Gamma =m.$ It implies that $\overline{\Y_\ell}=\mathscr{S}$ and proves the second assertion.
\end{proof}
From now on, we will always assume  that $\phi$ is generically finite onto its image, or equivalently that the closed image $\mathscr{S}$ of $\phi$ is a subvariety  in $\PP_k^n:=\Proj(B),$ of dimension $m.$  Therefore, by Lemma~\ref{Lemma2.1}, $\dim \ \overline{\Y}_m<0,$ which shows that $\Y_m=\emptyset.$ This was noticed  in  \cite[Lemma~14]{Botbol-Buse_Chardin14}.  Now if $\ell=m-1\geq 1,$ as $m\geq 2,$ then  $\overline{\Y}_{m-1}$  consists of only a finite number of points in $\PP_k^n.$  In other words,  $\pi$ only has a finite number of $(m-1)$-dimensional fibers. 

For any $y\in \Y_{m-1},\, \pi^{-1}(y)$ is a subcheme of $\PP_{k(y)}^m\simeq \PP_k^m$ of dimension $m-1,$ as $k$ is algebraically closed. Thus the unmixed part of the fiber $\pi^{-1}(y)$ is defined by a homogeneous polynomial $h_y\in R,$ as $R$ is factorial. Our purpose is then to bound  $\sum_{y\in \Y_{m-1}}\deg(h_y)$ in terms of the degree $d.$ 

The fibers of $\pi$ are defined by specialization of the Rees algebra. However, Rees algebras are hard to study. Fortunately, the symmetric algebra of $I$ is easier to understand than $\RI$ and the fibers of $\pi$ are closely related to the fibers of $$\pi^\prime:={\pi_2}_{\mid V}: V \longrightarrow \PP_k^n.$$
Recall that  for any  closed point $y\in \PP_k^n,$ the fiber of $\pi^\prime$ at $y$ is the subscheme
\begin{align*}
{\pi^\prime}^{-1}(y)=\Proj(\SI\otimes_B k(y))\subset \PP_{k(y)}^m\simeq \PP_k^m.
\end{align*}  
We have the following lemma. Recall that $X$ is the (possibly empty) set of points where $\VI$ is not locally a complete intersection. 
\begin{Lem}\label{Lemma2.0}
 The fibers of $\pi$ and $\pi^\prime$ agree outside $X,$ hence  they have the same  $(m-1)$-dimensional components.
\end{Lem}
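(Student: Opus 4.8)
The plan is to show that the two fibers agree scheme-theoretically over every closed point $y\in \PP_k^n$ outside the locus $X$, from which the statement about $(m-1)$-dimensional components follows immediately. The key input is the structural remark made just before the lemma: the formation of both $\RI=\Rees_R(I)$ and $\SI=\Sym_R(I)$ commutes with localization on $\Spec R$, and whenever $I$ is locally a complete intersection at a point, both algebras are obtained from a polynomial extension by quotienting by the Koszul relations on a minimal generating set. Consequently the canonical surjection $\SI\twoheadrightarrow \RI$ (corresponding to $\Gamma\subset V$) is an isomorphism after localizing at any prime of $R$ lying over a point of $\PP_k^m\setminus X$. Geometrically this says that $\Gamma$ and $V$ coincide as closed subschemes of $(\PP_k^m\setminus X)\times\PP_k^n$, a fact already recorded in the excerpt.

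First I would fix a closed point $y\in\PP_k^n$ with defining prime $\point\subset B$ and compare the two fibers $\pi^{-1}(y)=\Proj(\RI\otimes_B k(y))$ and ${\pi'}^{-1}(y)=\Proj(\SI\otimes_B k(y))$. Base change along $B\to k(y)$ is right exact, so tensoring the surjection $\SI\to\RI$ with $k(y)$ yields a surjection $\SI\otimes_B k(y)\twoheadrightarrow\RI\otimes_B k(y)$, hence a closed immersion $\pi^{-1}(y)\hookrightarrow{\pi'}^{-1}(y)$ of subschemes of $\PP_k^m$. The content of the lemma is that this closed immersion is an isomorphism away from $X$. To see this I would localize everything at an arbitrary closed point $q\in\PP_k^m\setminus X$: since $\Gamma$ and $V$ agree as subschemes on $(\PP_k^m\setminus X)\times\PP_k^n$, their fibers over $y$ agree on $(\PP_k^m\setminus X)$, i.e. the stalks of $\mathcal O_{\pi^{-1}(y)}$ and $\mathcal O_{{\pi'}^{-1}(y)}$ coincide at every such $q$. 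Thus the two fibers, viewed as subschemes of $\PP_k^m$, have the same intersection with the open set $\PP_k^m\setminus X$.

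Finally I would translate this into the statement about $(m-1)$-dimensional components. Since $\VI=\Proj(R/I)$ is the base locus, it has codimension at least one in $\PP_k^m$ (indeed, under the running hypotheses it has codimension $\geq 2$), and the exceptional locus $X$ is contained in $\VI$, so $\dim X\leq m-2$. Any component of a fiber $\pi^{-1}(y)$ or ${\pi'}^{-1}(y)$ of dimension $m-1$ is a hypersurface in $\PP_k^m$ and therefore cannot be contained in the lower-dimensional set $X$; hence it meets $\PP_k^m\setminus X$ in a dense open subset. Because the two fibers coincide on this open set, their $(m-1)$-dimensional components are identical, and in particular the unmixed one-codimensional parts defined by the polynomials $h_y$ are the same for $\pi$ and $\pi'$.

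The only genuinely delicate point is the dimension estimate $\dim X\leq m-2$: one must be sure that the non--complete--intersection locus $X$ sits inside the base locus and has codimension at least two, so that it cannot swallow an entire hypersurface component of a fiber. Everything else is a formal consequence of right-exactness of base change together with the local isomorphism $\SI\cong\RI$ on $\PP_k^m\setminus X$ established in the paragraph preceding the lemma.
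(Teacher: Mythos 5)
Your proposal is correct and follows essentially the same route as the paper: the fibers agree away from $X$ because $\Gamma$ and $V$ coincide on $(\PP_k^m\setminus X)\times\PP_k^n$, and since $X\subseteq\VI$ has dimension at most $m-2$ (codimension of $I$ being at least $2$), no $(m-1)$-dimensional component can be lost. The paper's own proof is just a terser version of exactly this argument; your added details (right-exactness of base change giving the closed immersion of fibers, and the stalkwise comparison) are correct elaborations of what the paper leaves implicit.
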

\begin{proof}
The first statement holds since $\Gamma$ and $V$ coincide on $(\PP_k^m\setminus X) \times \PP_k^n.$ Moreover, as $I$ is assumed to have  codimension  at least 2, $\dim \ \VI \leq m-2,$ showing that $\dim X\leq m-2.$ The second statement follows.
 \end{proof}

The following lemma is a simple generalization of \cite[Lemma~10]{Botbol-Buse_Chardin14}.
\begin{Lem} \label{Lemma2.3}
Let $I$ be a homogeneous ideal of $R$ generated by a minimal generating set of homogeneous polynomials $\ff:=f_0,\ldots,f_n$ of degree $d$ and  suppose that  $\gcd(f_0,\ldots,f_n)=1.$  Assume that the fiber of $\pi^\prime$ over a closed point $y$ with coordinates $(p_0: \cdots: p_n)$ is of dimension $m-1,$ and its unmixed components are defined by $h_y\in R.$ Let $\ell_y$ be a linear form in $\TT:=T_0,\ldots,T_n$ such that $\ell_y(p_0,\ldots,p_n)=1$ and set $\ell_i(\TT):=T_i-p_i\ell_y (\TT)\ (i=0,\ldots,n).$ Then, $h_y = \gcd(\ell_0(\ff),\ldots,\ell_n(\ff))$  and
$$I=\ell_y(\ff)+h_y(g_0,\ldots,g_n)$$
with $\ell_i(\ff)=h_y g_i$ and $\ell_y(g_0,\ldots,g_n)=0.$
\end{Lem}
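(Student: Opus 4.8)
The plan is to compute the scheme-theoretic fiber ${\pi^\prime}^{-1}(y)=\Proj(\SI\otimes_B k(y))$ explicitly after a linear change of the $\TT$-coordinates adapted to $y$, and then to read off its codimension-one part. First I would complete $\ell_y$ to a coordinate system: writing $\ell_y=\sum_i c_iT_i$ with $\sum_i c_ip_i=1$, the forms $\ell_0(\TT),\ldots,\ell_n(\TT)$ all vanish at $y$ and span the hyperplane of forms vanishing at $y$, subject to the single relation $\sum_i c_i\ell_i=0$ (this uses $\sum_ic_ip_i=1$). Taking $\ell_y$ together with a basis $T_1^\prime,\ldots,T_n^\prime$ of $\langle\ell_0,\ldots,\ell_n\rangle$ as new coordinates places $y$ at $(1:0:\cdots:0)$, and, writing $f_i^\prime:=T_i^\prime(\ff)$, gives $f_0^\prime=\ell_y(\ff)$ and $(f_1^\prime,\ldots,f_n^\prime)=(\ell_0(\ff),\ldots,\ell_n(\ff))$ as ideals with the same greatest common divisor.

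Then I would compute the fiber on the affine chart $\{\ell_y\neq 0\}$ of $\PP_k^n$ by dehomogenizing the $\TT$-direction. Recalling $\SI=S/(\mathfrak{P}_1)$, where $\mathfrak{P}_1$ is spanned by the syzygies $\sum_i a_iT_i^\prime$ of the $f_i^\prime$, the $\TT$-degree-zero part of $\SI[\ell_y^{-1}]$ is $R[s_1,\ldots,s_n]/(\{a_0+\sum_{i\geq 1}a_is_i\})$ with $s_i=T_i^\prime/\ell_y$; specializing to $y$, that is, setting $s_i=0$, collapses each relation to its constant term $a_0$. Since $a_0\ell_y(\ff)=-\sum_{i\geq 1}a_if_i^\prime$ shows that $a_0$ ranges exactly over the elements with $a_0\ell_y(\ff)\in(\ell_0(\ff),\ldots,\ell_n(\ff))$, I obtain ${\pi^\prime}^{-1}(y)=\Proj(R/\mathfrak{a})$ with $\mathfrak{a}=(\ell_0(\ff),\ldots,\ell_n(\ff))\colon\ell_y(\ff)$.

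Next I would isolate the codimension-one part. Set $\mathfrak{b}=(\ell_0(\ff),\ldots,\ell_n(\ff))$ and $h=\gcd(\ell_0(\ff),\ldots,\ell_n(\ff))$, so that $\ell_i(\ff)=hg_i$ with $\gcd(g_0,\ldots,g_n)=1$; since $R$ is factorial, $(g_0,\ldots,g_n)$ has codimension at least $2$. Localizing at a height-one prime $(p)$: if $p\mid h$ then $(g_0,\ldots,g_n)R_{(p)}=R_{(p)}$ and $\mathfrak{b}R_{(p)}=hR_{(p)}$, while if $p\nmid h$ then $\mathfrak{b}R_{(p)}=R_{(p)}$; hence the height-one primary components of $\mathfrak{b}$ multiply to $(h)$. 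To pass from $\mathfrak{b}$ to $\mathfrak{a}=\mathfrak{b}\colon\ell_y(\ff)$ I would invoke $\gcd(f_0,\ldots,f_n)=1$: if an irreducible $p\mid h$ also divided $\ell_y(\ff)$, then from $f_i=\ell_i(\ff)+p_i\ell_y(\ff)$ it would divide every $f_i$, a contradiction; thus $\ell_y(\ff)$ is a unit in each $R_{(p)}$ with $p\mid h$, so colon by it changes nothing there and $\mathfrak{a}$ and $\mathfrak{b}$ share the same height-one part. Therefore the unmixed component of the fiber is defined by $h$, i.e. $h_y=\gcd(\ell_0(\ff),\ldots,\ell_n(\ff))$.

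The remaining assertions are then formal: $g_i:=\ell_i(\ff)/h_y$ gives $\ell_i(\ff)=h_yg_i$; the identity $\sum_i c_i\ell_i(\ff)=\sum_i c_if_i-(\sum_ic_ip_i)\ell_y(\ff)=0$, after cancelling $h_y$ in the domain $R$, yields $\ell_y(g_0,\ldots,g_n)=\sum_ic_ig_i=0$; and $f_i=\ell_i(\ff)+p_i\ell_y(\ff)=h_yg_i+p_i\ell_y(\ff)$ gives $I=\ell_y(\ff)+h_y(g_0,\ldots,g_n)$. I expect the main obstacle to be the first two steps: correctly computing the scheme-theoretic fiber while handling the bigrading of $\SI$ so that the colon ideal $\mathfrak{a}$ genuinely appears, and then justifying that the passage from $\mathfrak{b}$ to $\mathfrak{a}$ leaves the codimension-one part untouched — which is precisely where the hypothesis $\gcd(f_0,\ldots,f_n)=1$ is used.
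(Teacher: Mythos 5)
Your proof is correct and is, in substance, exactly the argument the paper intends: the paper's own ``proof'' is only a pointer to the proof of Lemma~10 in \cite{Botbol-Buse_Chardin14}, which proceeds just as you do --- specialize the symmetric algebra at $y$ (after adapting coordinates via $\ell_y$) to identify the fiber ideal as $(\ell_0(\ff),\ldots,\ell_n(\ff))\colon\ell_y(\ff)$, then use $\gcd(f_0,\ldots,f_n)=1$ to see that the colon does not disturb the divisorial part, so that $h_y=\gcd(\ell_0(\ff),\ldots,\ell_n(\ff))$. Your write-up is a faithful (and correctly generalized, from $\PP^2\dashrightarrow\PP^3$ to $\PP^m\dashrightarrow\PP^n$) rendering of that argument, with the key step --- that passing from $\mathfrak{b}$ to $\mathfrak{b}\colon\ell_y(\ff)$ leaves the height-one part untouched --- handled correctly.
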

\begin{proof}
The proof of this result goes along the same lines as in the proof of \cite[Lemma~10]{Botbol-Buse_Chardin14}.
\end{proof}

For $\ff=f_0,\ldots,f_n,$ set 
\begin{displaymath}
J(\ff)=\pmt{\frac{\partial f_0}{\partial X_0} &\cdots &\frac{\partial f_0}{\partial X_m}\\ \vdots &&\vdots\\ \frac{\partial f_n}{\partial X_0} &\cdots &\frac{\partial f_n}{\partial X_m}}
\end{displaymath}
for the Jacobian matrix  of $\ff$ and $I_s(J(\ff))$ for the ideal of $R$ generated by the $s\times s$ minors of $J(\ff),$ where $1\leq s\leq m+1.$ 

\begin{Lem}\label{Lemma2.4}
Suppose that $\dim_k I_d=n+1$ and let  $\ff=f_0,\ldots,f_n$ and $\gg=g_0,\ldots,g_n$ be two bases of $I_d.$ Then  $I_s(J(\ff))=I_s(J(\gg))),$ for any $s$.
\end{Lem}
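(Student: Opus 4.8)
The plan is to reduce the statement to the elementary fact that the ideal of $s$-minors of a matrix over $R$ is unchanged after left multiplication by an invertible scalar matrix, since passing from one basis of $I_d$ to another is exactly such a multiplication on the rows of the Jacobian.

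First I would record the change of basis. Since $\ff$ and $\gg$ are both bases of the $(n+1)$-dimensional $k$-vector space $I_d$, there is an invertible matrix $A=(a_{ij})_{0\le i,j\le n}$ with entries in $k$ such that $g_i=\sum_{j=0}^{n}a_{ij}f_j$ for all $i$. The coefficients $a_{ij}$ are constants, so differentiation is $k$-linear with respect to them: applying $\partial/\partial X_\ell$ to this identity gives $\frac{\partial g_i}{\partial X_\ell}=\sum_{j=0}^{n}a_{ij}\frac{\partial f_j}{\partial X_\ell}$ for every $\ell$. In matrix form this reads
\begin{equation*}
J(\gg)=A\cdot J(\ff),
\end{equation*}
where $A$ acts on the rows of the $(n+1)\times(m+1)$ Jacobian.

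The key step is the purely linear-algebraic claim: if $A$ is an invertible $(n+1)\times(n+1)$ matrix over $k$ and $M$ is any $(n+1)\times(m+1)$ matrix over $R$, then $I_s(AM)=I_s(M)$ for every $s$. I would prove this via the Cauchy--Binet formula: for any choice of $s$ rows $\rho$ and $s$ columns $\gamma$, the corresponding $s$-minor of $AM$ expands as $\sum_{|\sigma|=s}\det(A_{\rho,\sigma})\det(M_{\sigma,\gamma})$, where $\sigma$ runs over the $s$-subsets of $\{0,\dots,n\}$, the matrix $A_{\rho,\sigma}$ is the submatrix of $A$ on rows $\rho$ and columns $\sigma$, and $M_{\sigma,\gamma}$ is an $s$-minor of $M$. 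Since the coefficients $\det(A_{\rho,\sigma})$ lie in $k$, every $s$-minor of $AM$ is a $k$-linear combination of $s$-minors of $M$, whence $I_s(AM)\subseteq I_s(M)$. Writing $M=A^{-1}(AM)$ and applying the same inclusion to $A^{-1}$ yields the reverse containment, hence equality. Taking $M=J(\ff)$ and $AM=J(\gg)$ then gives $I_s(J(\gg))=I_s(J(\ff))$ for all $s$.

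The argument is entirely routine, and there is no serious obstacle; the only points requiring a little care are keeping track of the direction of multiplication (the change of basis acts on the rows, i.e. on the left) and observing that the $a_{ij}$ are scalars in $k$, so that they both commute past the differentiation and contribute only scalar coefficients in the Cauchy--Binet expansion.
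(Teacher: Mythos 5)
Your proof is correct and follows essentially the same route as the paper: the paper's one-line proof observes that $I_s(J(\ff))$ and $I_s(J(\gg))$ are Fitting ideals of two matrices related by a change of basis over $k$, which is exactly your identity $J(\gg)=A\cdot J(\ff)$ with $A\in GL_{n+1}(k)$. The only difference is that you prove the invariance of ideals of minors under left multiplication by an invertible scalar matrix directly via Cauchy--Binet, whereas the paper cites it as a known property of Fitting ideals.
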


\begin{proof}
Indeed, these are the Fitting ideals (with the same indices) of two matrices that are equal after change of basis over the base field. 
 \end{proof}

\smallskip
Recall that for any $y\in \Y_{m-1}$, we  denote by $h_y\in R$ a defining equation of the unmixed part of the fiber $\pi^{-1}(y)$ (recall that $k$ is algebraically closed and $R$ is factorial). Assume that $h_y~=~h_1^{e_1}\cdots h_{r_y}^{e_{r_y}}$ is an irreducible factorization of $h_y$ in $R.$
\begin{Theo} \label{Theorem2.5}
Let $I$ be a homogeneous ideal of $R$ generated by a minimal generating set of homogeneous polynomials $\ff:=f_0,\ldots,f_n$ of degree $d.$ Suppose that  $\gcd(f_0,\ldots,f_n)=1$ and $I_3(J(\ff))\neq 0.$ Let $F$ be  the greatest common divisor  of generators of $I_3(J(\ff)).$  Then
$$\sum_{y \in \Y_{m-1}}\deg(h_y)\leq \sum_{y \in \Y_{m-1}}\sum_{i=1}^{r_y}(2e_{i}-1)\deg(h_{i})\leq \deg(F)\leq 3(d-1).$$
\end{Theo}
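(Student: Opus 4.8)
The plan is to prove the three inequalities in the displayed chain separately; the middle one is the heart of the matter. The outer two are immediate. Since $h_y = h_1^{e_1}\cdots h_{r_y}^{e_{r_y}}$ we have $\deg(h_y) = \sum_{i} e_i \deg(h_i)$, and as $e_i \le 2e_i - 1$ for every $i$ (because $e_i \ge 1$), the first inequality follows term by term after summing over $y$. For the last inequality, every entry of $J(\ff)$ is a partial derivative of a form of degree $d$ and so has degree $d-1$; hence each nonzero $3\times 3$ minor has degree $3(d-1)$, and $F$, which divides every minor, satisfies $\deg(F) \le 3(d-1)$ --- a nonzero minor exists precisely because $I_3(J(\ff)) \neq 0$.

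For the middle inequality I would establish the divisibility
$$\prod_{y\in\Y_{m-1}}\ \prod_{i=1}^{r_y} h_i^{\,2e_i-1}\ \ \Big|\ \ F \quad\text{in } R,$$
and then take degrees. Two reductions turn this into a statement about one irreducible factor at a time. First, for distinct $y,y'\in\Y_{m-1}$ the polynomials $h_y$ and $h_{y'}$ are coprime: an irreducible factor $h$ of $h_y$ cuts out a hypersurface $Z(h)$ contained in the fiber over $y$, and since $\gcd(f_0,\ldots,f_n)=1$ forces $\VI$ to have codimension at least two, $Z(h)\not\subset\VI$, so its generic point lies in $\Omega_\phi$ and maps under $\phi$ to $y$; thus $h$ cannot also divide $h_{y'}$. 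Consequently all the factors $h_i$ arising across all fibers are pairwise non-associate, and it suffices to show that each appears in $F$ with multiplicity at least $2e_i-1$, i.e. that $h_i^{\,2e_i-1}$ divides every $3\times3$ minor of $J(\ff)$.

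So fix $y$ and an irreducible factor $h=h_i$ of $h_y$ of multiplicity $e=e_i$, and write $h_y=h^e u$ with $h\nmid u$. By Lemma~\ref{Lemma2.3} one has $I=\ell_y(\ff)+h_y(g_0,\ldots,g_n)$ with $\ell_j(\ff)=h_y g_j$; since $\ell_0,\ldots,\ell_n$ span the $n$-dimensional space of linear forms vanishing at $(p_0:\cdots:p_n)$ while $\ell_y$ does not vanish there, a basis of $I_d$ is furnished by $F_0:=\ell_y(\ff)$ together with $n$ of the combinations $h_y g_j=\ell_j(\ff)$. By Lemma~\ref{Lemma2.4} the ideals of minors, and in particular $F$, are unchanged when computed from this basis. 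Its Jacobian has a single top row $\nabla F_0$ and $n$ rows $\nabla(h_y g_j)=g_j\,\nabla h_y+h_y\,\nabla g_j$; from $\nabla(h^e u)=h^{e-1}(e u\,\nabla h+h\,\nabla u)$ each of these $n$ rows is divisible by $h^{e-1}$, say $\nabla(h_y g_j)=h^{e-1}\tilde R_j$ with $\tilde R_j\equiv e u g_j\,\nabla h \pmod h$.

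The computation then runs as follows. A $3\times3$ minor uses the top row at most once, hence involves at least two of the rows $\tilde R_j$; pulling $h^{e-1}$ out of each such row already produces a factor $h^{2(e-1)}$. The one extra power is the crux of the proof: modulo $h$ any two of the $\tilde R_j$ are scalar multiples of the single row $\nabla h$, hence proportional, so the corresponding $3\times3$ determinant vanishes modulo $h$ and supplies one more factor of $h$. Thus every minor is divisible by $h^{2(e-1)+1}=h^{2e-1}$ (when three $\tilde R$-rows are used one even gets $h^{3e-2}$, still divisible by $h^{2e-1}$ as $e\ge1$), which is the required $h^{2e-1}\mid F$. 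I expect this last step --- extracting the extra power of $h$ from the proportionality modulo $h$ --- to be the main obstacle, since it is what sharpens the naive bound $2(e-1)$ to $2e-1$; the multiplicity-one instance $e=1$ is exactly what Proposition~\ref{Proposition1.3} yields for the $(m-1)$-dimensional fibers (where $r=m-1$ gives $I_{m-r+2}=I_3$), so the argument above is its higher-multiplicity refinement.
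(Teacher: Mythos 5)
Your proof is correct and takes essentially the same route as the paper's: the same reduction via Lemma~\ref{Lemma2.3} and Lemma~\ref{Lemma2.4} to the Jacobian of an adapted basis, the same extraction of $h^{e-1}$ from each non-top row, and the same rank-one-modulo-$h$ observation supplying the extra factor (the paper handles all irreducible factors of $h_y$ simultaneously via a matrix $N$ of rank at most $2$, whereas you argue one irreducible factor at a time --- the same idea). The only step you gloss over is the identification of the unmixed parts of the fibers of $\pi$ with those of $\pi'$ (the paper's Lemma~\ref{Lemma2.0}), which is needed before Lemma~\ref{Lemma2.3} can be applied to the $h_y$ of the statement, since those are defined by fibers of the Rees algebra rather than of the symmetric algebra.
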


\begin{proof}
By Lemma~\ref{Lemma2.0}, the unmixed components of $\pi^{-1}(\point) $ and ${\pi^\prime}^{-1}(\point)$ are the same for every closed point $\point\in \Y_{m-1}.$ By Lemma~\ref{Lemma2.3}, there exists a homogeneous polynomial $f\in I$ of degree $d$ such that, for any $\point\in \Y_{m-1}$
$$I=(f)+h_y (g_{1y}, \ldots,g_{ny}),$$
for some $ g_{1y}, \ldots,g_{ny}\in R$. 

\smallskip
The Jacobian matrix of $\ff^\prime=(f,h_y g_{1y},\ldots,h_y g_{ny})$ is
\begin{displaymath}
J(\ff^\prime)=\pmt{\frac{\partial f}{\partial X_0} &\cdots &\frac{\partial f}{\partial X_m}\\ h_y\frac{\partial g_{1y}}{\partial X_0}+g_{1y}\frac{\partial h_y}{\partial X_0} &\cdots &h_y\frac{\partial g_{1y}}{\partial X_m}+g_{1y}\frac{\partial h_y}{\partial X_m}\\ \vdots & &\vdots\\ h_y\frac{\partial g_{ny}}{\partial X_0}+g_{ny}\frac{\partial h_y}{\partial X_0} &\cdots &h_y\frac{\partial g_{ny}}{\partial X_m}+g_{ny}\frac{\partial h_y}{\partial X_m}}.
\end{displaymath}
 For all $j=0,\ldots, m$
$$\frac{\partial h_y}{\partial X_j}=\sum_{i=1}^{r_y}e_{i}\frac{h_y}{h_{i}}\frac{\partial h_{i}}{\partial X_j},$$
therefore the $i$-th row of $J(\ff^\prime)$ has a common factor $h_1^{e_1-1}\cdots h_{r_y}^{e_{r_y}-1},$ for all $i=2,\ldots, n+1.$  It follows that, for any subset $\mathcal{I}$ of $\{1,\ldots,n+1\}$ with $3$ elements and  a subset $\mathcal{J}$ of $\{1,\ldots,m+1\}$ with $3$ elements,
\begin{equation}\label{equation2.2}
[J(\ff^\prime)]_{\mathcal{I},\mathcal{J}}=h_1^{2(e_1-1)}\cdots h_{r_y}^{2(e_{r_y}-1)}[M]_{\mathcal{I},\mathcal{J}},
\end{equation}
where $[Q]_{\mathcal{I},\mathcal{J}}$ denotes the 3-minor of a $(n+1)\times (m+1)$-matrix $Q$ that corresponds to the rows with index in $\mathcal{I}$ and the columns with index in $\mathcal{J}$ and  $M$  is the $(n+1)\times(m+1)$-matrix 
\begin{displaymath}
\pmt{\frac{\partial f}{\partial X_0} &\cdots &\frac{\partial f}{\partial X_m}\\ \widehat{h}_y\frac{\partial g_{1y}}{\partial X_0}+g_{1y}\sigma_0 &\cdots &\widehat{h}_y\frac{\partial g_{1y}}{\partial X_m}+g_{1y}\sigma_m\\ \vdots & &\vdots\\ \widehat{h}_y\frac{\partial g_{ny}}{\partial X_0}+g_{ny}\sigma_0 &\cdots &\widehat{h}_y\frac{\partial g_{ny}}{\partial X_m}+g_{ny}\sigma_m},
\end{displaymath}
where $\widehat{h}_y=h_1\cdots h_{r_y}$ and $\sigma_j=\sum\limits_{i=1}^{r_y}e_i\frac{\widehat{h}_y}{h_i}\frac{\partial h_i}{\partial X_j},\ (j=0,\ldots ,m).$ Thus there is a homogeneous polynomial  $P$ such that $[M]_{\mathcal{I},\mathcal{J}}=\widehat{h}_y P +[N]_{\mathcal{I},\mathcal{J}},$ where $N$ is the $(n+1)\times (m+1)$-matrix
 \begin{displaymath}
 \pmt{\frac{\partial f}{\partial X_0} &\cdots &\frac{\partial f}{\partial X_m}\\ g_{1y}\sigma_0 &\cdots&g_{1y}\sigma_m\\ \vdots &&\vdots\\ g_{ny}\sigma_0 &\cdots &g_{ny}\sigma_m}
 \end{displaymath}
which shows that $[N]_{\mathcal{I},\mathcal{J}}=0,$ as $\rank\ N\leq 2.$  By \eqref{equation2.2}, we obtain
\begin{equation} \label{equation2.3}
[J(\ff^\prime)]_{\mathcal{I},\mathcal{J}}=h_1^{2(e_1-1)}\cdots h_{r_y}^{2(e_{r_y}-1)}\widehat{h}_y P=h_1^{2e_1-1}\cdots h_{r_y}^{2e_{r_y}-1}P,
\end{equation}
for all $\mathcal{I},\mathcal{J}.$
Let $G$ be the greatest common divisor of generators of $I_3(J(\ff^\prime)).$ Then  $h_1^{2e_1-1}\cdots h_{r_y}^{2e_{r_y}-1}$ is a divisor of $G$ by \eqref{equation2.3}. By Lemma~\ref{Lemma2.4}, $h_1^{2e_1-1}\cdots h_{r_y}^{2e_{r_y}-1}$ is a divisor of $F.$ \\

\smallskip
Moreover, if $y\neq y^\prime$ in $\Y_{m-1}$, then $\gcd(h_y,h_{y^\prime})=1,$ hence $\gcd(h_i,h_j^\prime)=1,$ for every factor $h_i$ (res. $h_j^\prime$) of $h_y$ (res. $h_{y^\prime}$). We deduce that
$$\prod_{y\in \Y_{m-1}}h_1^{2e_1-1}\cdots h_{r_y}^{2e_{r_y}-1} \mid F$$
which gives
$$\sum_{y \in \Y_{m-1}}\sum_{i=1}^{r_y}(2e_i-1)\deg(h_i)\leq \deg(F)\leq 3(d-1).$$
\end{proof}

\begin{Rem}
Let $p=\chara(k)$ be the characteristic of the field $k.$ Then there are two cases:
\begin{enumerate}
\item[(i)] Case 1: $p$ does not divide $d.$ Then  $I_{m+1}(J(\ff))\neq 0$ if and only if $[k(\ff):k(\XX)]$ is separable, where $\XX:=X_0,\ldots,X_m.$ In particular, if $p=0,$ then the condition $I_{m+1}(J(\ff))\neq 0$  always holds.
\item[(ii)] Case 2: $p$ divides $d.$ Then  $I_{m+1}(J(\ff))\neq 0$  only if $[k(\ff):k(\XX)]$ is separable.
\end{enumerate} 
\end{Rem}
Note that if $I_{m+1}(J(\ff))\neq 0,$ then $I_j(J(\ff))\neq 0,$ for all $1\leq j\leq m+1.$ In particular, if the characteristic of $k$ is 0, then the assumption $I_3(J(\ff))\neq 0$ is always satisfied.
\begin{Rem}$\;$
\begin{enumerate}
\item[(i)] The inequality 
$$\sum_{y \in \Y_{m-1}}\deg(h_y)\leq \sum_{y \in \Y_{m-1}}\sum_{i=1}^{r_y}(2e_i-1)\deg(h_i)$$
becomes an equality if and only if the defining equation of the unmixed component of the fiber $\pi^{-1}(y)$ has no multiple factors, for every $y\in \Y_{m-1}.$
\item[(ii)] The bound
$$\sum_{y \in \Y_{m-1}}\sum_{i=1}^{r_y}(2e_i-1)\deg(h_i)\leq \deg(F)$$
is optimal as the following example shows.
\end{enumerate} 
\end{Rem}
\begin{Exem} \cite[Example~10]{QHTran17} Let $d\geq 4 $ be an integer. Consider the parameterization given by $\ff=f_0,\ldots,f_3,$ with 
\begin{align*}
\begin{array}{ccc}
f_0= X_0^{d-3}X_1(X_0^2-X_1^2),&&f_2=X_0^{d-3}X_2(X_1^2-X_2^2),\\
f_1=X_0^{d-3}X_2(X_0^2-X_1^2),&&f_3=X_1^{d-3}X_2(X_1^2-X_2^2).
\end{array}
\end{align*}
Using \texttt{Macaulay2} \cite{Macaulay2}, the greatest common divisor  of generators of $I_3(J(\ff))$ is
$$F=X_0^{2d-7}X_2(X_0^2-X_1^2)(X_1^2-X_2^2).$$ 
It is known as in \cite[Example~10]{QHTran17} that
$$\sum_{y\in \Y_1}\deg(h_y)=d+2$$
and
$$\sum_{y \in \Y_1}\sum_{i=1}^{r_y}(2e_i-1)\deg(h_i)=2(d-1)= \deg(F)< 3(d-1).$$
Furthermore, if $d=4,$ then $$\sum_{y\in \Y_1}\deg(h_y)=\sum_{y \in \Y_1}\sum_{i=1}^{r_y}(2e_i-1)\deg(h_i)=\deg(F).$$
\end{Exem}
\section{Bound for the number of one-dimensional fibers of a parameterization surface}
In this section, we will treat  the case of parameterization $\phi:  \PP_k^2 \dashrightarrow \PP_k^3$ of algebraic rational surfaces. Such a map $\phi$ is defined by four homogeneous polynomials $f_0,\ldots,f_3,$ not all zero, of the same degree $d,$ in the standard graded polynomial ring $R=k[X_0,X_1,X_2].$ Our objective is to refine the bound for the cardinality of the set of points in $\PP_k^3$ with a one-dimensional fiber, that is, the cardinality of the set 
$$\Y_1=\{y\in \PP_k^3\, \mid\, \dim \pi^{-1}(y)=1\}.$$

The following result is a direct consequence of Theorem~\ref{Theorem2.5}. It improves the results of \cite{QHTran17} and  the question  \cite[Question~11]{QHTran17} is answered in the affirmative. 
\begin{Cor} \label{Corollary3.1}
Let $I$ be a homogeneous ideal of $R=k[X_0,X_1,X_2]$ generated by a minimal generating set of homogeneous polynomials $\ff:=f_0,\ldots,f_3$ of degree $d.$ Suppose that  $I$ has codimension 2 and $I_3(J(\ff))\neq 0.$ Let $F$ be  the greatest common divisor  of generators of $I_3(J(\ff)).$  Then
$$\sum_{y\in \Y_1}\deg(h_y)\leq \sum_{y\in \Y_1}\sum_{i=1}^{r_y}(2e_i-1)\deg(h_i)\leq \deg(F)\leq 3(d-1),$$
where $h_y=h_1^{e_1}\cdots h_{r_y}^{e_{r_y}}$ is an irreducible factorization of a defining equation  $h_y\in R$ of the unmixed component of the fiber $\pi^{-1}(y),$ for all $y\in \Y_1.$
\end{Cor}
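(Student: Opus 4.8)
The plan is to obtain the statement as an immediate specialization of Theorem~\ref{Theorem2.5} to the case $m=2$, $n=3$. Under this specialization $m-1=1$, so $\Y_{m-1}$ becomes exactly $\Y_1$, and the factorizations $h_y=h_1^{e_1}\cdots h_{r_y}^{e_{r_y}}$ together with the whole chain of inequalities $\sum_{y}\deg(h_y)\le \sum_{y}\sum_i(2e_i-1)\deg(h_i)\le \deg(F)\le 3(d-1)$ are verbatim the conclusion of that theorem. Thus the only thing to check is that the hypotheses of Theorem~\ref{Theorem2.5} hold in the present setting. The standing assumptions of this part of the paper (in particular that $\phi$ is generically finite onto its image, so that $\Y_1$ is a finite set) remain in force, and the hypothesis $I_3(J(\ff))\neq 0$ is assumed outright. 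Hence the single point requiring argument is the condition $\gcd(f_0,\ldots,f_3)=1$, which Theorem~\ref{Theorem2.5} demands but which is here replaced by the hypothesis that $I$ has codimension $2$.

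To bridge this gap I would argue as follows. Put $g:=\gcd(f_0,\ldots,f_3)$ and suppose, for contradiction, that $g$ is nonconstant. Then each $f_i$ lies in the principal ideal $(g)$, whence $I\subseteq (g)$ and therefore $\hei(I)\le \hei((g))$. Since $R=k[X_0,X_1,X_2]$ is a domain and $g$ is a nonzero nonunit, Krull's principal ideal theorem gives $\hei((g))=1$, so $\hei(I)\le 1$. This contradicts the assumption that $I$ has codimension $2$, and we conclude that $g$ is constant, i.e.\ $\gcd(f_0,\ldots,f_3)=1$.

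With both hypotheses of Theorem~\ref{Theorem2.5} verified, I would simply invoke that theorem with $m=2$ and $n=3$ to read off the desired inequalities, completing the proof. I do not expect any genuine obstacle: the sole nontrivial step is the codimension-to-gcd translation of the previous paragraph, and this is entirely routine once Krull's Hauptidealsatz is brought to bear. Everything else is a matter of matching the specialized notation of Theorem~\ref{Theorem2.5} to that of the corollary.
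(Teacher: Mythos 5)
Your proposal is correct and matches the paper's treatment: the paper presents Corollary~\ref{Corollary3.1} as a direct consequence of Theorem~\ref{Theorem2.5} specialized to $m=2$, $n=3$, exactly as you do. Your explicit verification that codimension $2$ forces $\gcd(f_0,\ldots,f_3)=1$ (via Krull's Hauptidealsatz) is a correct and welcome filling-in of a step the paper leaves implicit.
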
 
Now we study the syzygies of $f_i$'s in relation with the degree of the greatest common divisor of the generators of $I_3(J(\ff)).$
 \begin{pro} \label{Proposition2.9}
 Let $\ff:=f_0,\ldots,f_3$ be the homogeneous polynomials of degree $d.$ Let $F$ be  the greatest common divisor  of generators of $I_3(J(\ff)).$ Suppose that $p$ does not divide $d.$ If $\deg(F)=3(d-1)-\delta,$ then  $I=(f_0,\ldots,f_3)$ has a syzygy of degree $\delta:$ there exist the homogeneous polynomials $a_0,\ldots, a_3\in R,$ not all 0, of degree $\delta,$ such that
 $$a_0 f_0+\cdots+a_3f_3=0.$$
 \end{pro}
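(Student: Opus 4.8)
The plan is to build the required syzygy directly out of the maximal minors of the Jacobian matrix. Here $J(\ff)$ is a $4\times 3$ matrix whose entries $\partial f_i/\partial X_j$ are homogeneous of degree $d-1$, so each of its four $3$-minors is homogeneous of degree $3(d-1)$. For $i=0,\ldots,3$ set $\Delta_i=(-1)^i M_i$, where $M_i$ is the $3$-minor of $J(\ff)$ obtained by deleting the $i$-th row, and put $a_i:=\Delta_i/F$. Since $F$ is the greatest common divisor of the generators of $I_3(J(\ff))$ and has finite degree $3(d-1)-\delta$, the $\Delta_i$ are not all zero, $F\neq 0$, each $a_i$ is a genuine polynomial, homogeneous of degree $3(d-1)-\deg(F)=\delta$, and the $a_i$ are not all zero. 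The goal then reduces to proving $\sum_i \Delta_i f_i=0$, since dividing by the nonzero $F$ in the domain $R$ will give $\sum_i a_i f_i=0$.

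The key identity I would establish first is that the signed minors furnish a left-kernel relation for $J(\ff)$:
\[
\sum_{i=0}^3 \Delta_i\,\frac{\partial f_i}{\partial X_j}=0 \qquad (j=0,1,2).
\]
This is the standard Laplace/cofactor observation: for each fixed $j$, append the $j$-th column of $J(\ff)$ to $J(\ff)$ as a fourth column to obtain a $4\times 4$ matrix with two equal columns, hence determinant $0$; expanding this determinant along the appended column yields precisely the displayed relation.

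Finally I would combine this with Euler's formula $\sum_j X_j\,\partial f_i/\partial X_j=d\,f_i$, valid as the $f_i$ are homogeneous of degree $d$. Multiplying the left-kernel relation by $X_j$ and summing over $j$ gives
\[
0=\sum_j X_j\sum_i \Delta_i\frac{\partial f_i}{\partial X_j}=\sum_i \Delta_i\sum_j X_j\frac{\partial f_i}{\partial X_j}=d\sum_i \Delta_i f_i.
\]
Because $p$ does not divide $d$, the element $d$ is invertible in $k$, so $\sum_i \Delta_i f_i=0$, and dividing by $F$ delivers the syzygy $\sum_i a_i f_i=0$ of degree $\delta$. The only substantive points are the verification of the left-kernel identity and its clean interaction with Euler's relation; the hypothesis $p\nmid d$ enters exactly to permit division by $d$, which is precisely where, as Remark~\ref{Remark2} indicates, the situation would otherwise degenerate. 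Everything else is degree bookkeeping, so I expect no serious obstacle beyond getting the signs in the Laplace expansion right.
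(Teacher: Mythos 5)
Your proof is correct and takes essentially the same approach as the paper: the authors likewise form the signed $3$-minors $D_i$ of $J(\ff)$, invoke the cofactor identity $\sum_i D_i\,\partial f_i/\partial X_j=0$, combine it with Euler's formula (using $p\nmid d$), and set $a_i:=D_i/F$. Your write-up merely supplies the details (the repeated-column Laplace argument and the degree bookkeeping) that the paper leaves implicit.
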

 \begin{proof}
 If $D_i$ is the $i$-th signed $3\times 3$ of minor of $J(\ff)$, one has  
 $$
 \sum_i D_i \frac{\partial f_i}{\partial X_j}=0
 $$
 for $j=0,1,2$. 
 It then follows from the Euler formula that $(D_0,D_1,D_2,D_3)$ is a syzygy of the $f_i$'s, whenever $d$ is prime to $p$. Set $a_i:=D_i/F$.
 \end{proof}

 \begin{Cor}\label{Corollary2.10}
 Under the assumptions of Proposition~\ref{Proposition2.9}, $\deg(F)=3(d-1)$ if and only if $f_0,\ldots,f_3$ are linearly dependent over $k.$
 \end{Cor}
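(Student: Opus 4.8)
The plan is to prove the two implications separately: the forward direction is essentially a reading of Proposition~\ref{Proposition2.9} with $\delta=0$, while the converse requires a short rank computation on the Jacobian. Throughout I use the elementary bound $\deg(F)\le 3(d-1)$, which holds because $F$ divides each signed $3$-minor $D_i$ of $J(\ff)$ and every nonzero such minor is homogeneous of degree $3(d-1)$.

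First I would show that $\deg(F)=3(d-1)$ forces linear dependence. Since $F\mid D_i$ for every $i$ and each nonzero $D_i$ has degree $3(d-1)=\deg(F)$, each quotient $a_i:=D_i/F$ lies in $k$ (it is $0$ precisely when $D_i=0$). By Proposition~\ref{Proposition2.9} applied with $\delta=0$, the tuple $(a_0,\ldots,a_3)$ is a syzygy of $\ff$, so $\sum_i a_i f_i=0$. Because $I_3(J(\ff))\neq 0$ the minors $D_i$ are not all zero, hence the constants $a_i$ are not all zero, and $f_0,\ldots,f_3$ are linearly dependent over $k$.

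For the converse, suppose $\sum_i c_i f_i=0$ for some $\mathbf c=(c_0,\ldots,c_3)\in k^4\setminus\{0\}$. Differentiating this relation in each variable gives $\mathbf c^{\,T}J(\ff)=0$, so $\mathbf c$ is a left null vector of $J(\ff)$ over $k(\XX)$. On the other hand, the Cramer relation used in the proof of Proposition~\ref{Proposition2.9} shows that the vector of signed minors $\mathbf D=(D_0,\ldots,D_3)$ also satisfies $\mathbf D^{\,T}J(\ff)=0$. Since $I_3(J(\ff))\neq 0$, the matrix $J(\ff)$ has rank $3$ over $k(\XX)$, so its space of left null vectors is one-dimensional; as $\mathbf D\neq 0$ it spans this space, whence $\mathbf D=\lambda\,\mathbf c$ for some $\lambda\in k(\XX)$. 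Choosing an index $i_0$ with $c_{i_0}\neq 0$ gives $\lambda=D_{i_0}/c_{i_0}\in R$, a nonzero homogeneous polynomial of degree $3(d-1)$, and $D_i=c_i\lambda$ for all $i$. Factoring out $\lambda$ and using that the nonzero $c_i$ are units then yields $F=\gcd_i(D_i)=\lambda$ up to a nonzero scalar, so $\deg(F)=3(d-1)$.

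The only step that needs care is the rank computation in the converse: I must justify that the space of left null vectors of $J(\ff)$ is exactly one-dimensional over $k(\XX)$ — this is where the hypothesis $I_3(J(\ff))\neq 0$, forcing $\rank J(\ff)=3$, is essential — and then check that proportionality $\mathbf D=\lambda\,\mathbf c$ with $\mathbf c$ constant really makes $F$ absorb the full degree $3(d-1)$ rather than a proper divisor. Everything else is a direct transcription of Proposition~\ref{Proposition2.9} together with the bound $\deg(F)\le 3(d-1)$.
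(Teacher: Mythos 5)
Your proof is correct, and while the forward implication is exactly the paper's (Proposition~\ref{Proposition2.9} with $\delta=0$ produces a syzygy with constant coefficients, not all zero), your converse takes a genuinely different route. The paper normalizes the dependence relation (say $\lambda_0=-1$), writes $f_0=\lambda_1f_1+\lambda_2f_2+\lambda_3f_3$, differentiates, and computes the four $3$-minors directly: the row of $f_0$ in $J(\ff)$ is the corresponding combination of the other three rows, so $I_3(J(\ff))=(F_1,\lambda_1F_1,\lambda_2F_1,\lambda_3F_1)$, where $F_1$ is the minor on the rows of $f_1,f_2,f_3$; hence $F=F_1$ has degree $3(d-1)$. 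You instead observe that both the constant vector $\mathbf{c}$ and the vector $\mathbf{D}$ of signed minors are left null vectors of $J(\ff)$ (the latter by the Cramer relation from the proof of Proposition~\ref{Proposition2.9}), and that $I_3(J(\ff))\neq 0$ forces $\rank J(\ff)=3$, so the left kernel over $k(\XX)$ is one-dimensional and $\mathbf{D}=\lambda\,\mathbf{c}$; since $\mathbf{c}$ is constant, $F=\gcd_i(c_i\lambda)$ equals $\lambda$ up to a unit, of degree $3(d-1)$. Your version makes explicit where $I_3(J(\ff))\neq 0$ enters (it is precisely what makes the left kernel one-dimensional), avoids the normalization, and is visibly characteristic-free in this direction; the paper's version is more elementary, a short determinant manipulation with no fraction-field linear algebra. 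Both arguments are complete, and your closing gcd step is the same observation the paper makes implicitly when it passes from $(F_1,\lambda_1F_1,\lambda_2F_1,\lambda_3F_1)$ to $F=F_1$.
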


 \begin{proof}
 Suppose that $\deg(F)=3(d-1).$ By Proposition~\ref{Proposition2.9}, there exist  $a_0,\ldots,a_0\in k,$ not all zero, such that 
 $$a_0f_0+a_1f_1+a_2 f_2+a_3f_3=0.$$ 
 	
 \smallskip
 Suppose that $f_0,\ldots,f_3$ are linearly dependent over $k.$ Then there are $\lambda_0,\ldots, \lambda_3\in k,$ not all 0, such that  $\lambda_0f_0+\cdots+\lambda_3f_3=0.$ Without loss of the generality, we assume that $\lambda_0=-1,$ hence $f_0=\lambda_1f_1+\lambda_2 f_2+ \lambda_3f_3.$ It follows that 
 $$\frac{\partial f_0}{\partial X_j}=\lambda_1\frac{\partial f_1}{\partial X_j}+\lambda_2\frac{\partial f_2}{\partial X_j}+ \lambda_3\frac{\partial f_3}{\partial X_j}, \; \text{for all}\; j=0,1,2.$$
 Thus, we obtain $I_3(J(\ff))=(F_1,\lambda_1 F_1,\lambda_2 F_1,\lambda_3 F_1),$ which shows that $F=F_1.$
\end{proof}
 
We denote by $\Syz(I)\subseteq R^4$ the module of syzygies of $I$. It is a graded module and in the structural graded exact sequence
 $$0\longrightarrow Z_1\longrightarrow R^4(-d) \xrightarrow{(f_0,f_1,f_2,f_3)} I\longrightarrow 0,$$
 we have the identification $\Syz(I)=Z_1(d).$  Recall that for a finitely generated graded $R$-module $M,$ its initial degree is defined by
 $$\indeg(M):=\inf \{ \nu  \mid  M_\nu\neq 0\},$$
 with the convention $\indeg(M)=+\infty$ when $M=0$.
 
 \begin{Cor}\label{Corollary4.4}
 Under the assumptions of Corollary~\ref{Corollary3.1}, if $p$ does not divide $d,$ then 
 $$\sum_{y\in \Y_1}\deg(h_y)\leq 3(d-1)-\indeg(\Syz(I))< 3(d-1),$$
where $h_y=h_1^{e_1}\cdots h_{r_y}^{e_{r_y}}$ is an irreducible factorization of a defining equation $h_y\in R$  of the unmixed component of the fiber $\pi^{-1}(y),$ for all $y\in \Y_1$
 \end{Cor}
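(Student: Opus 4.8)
The plan is to combine the general bound of Corollary~\ref{Corollary3.1}, namely $\sum_{y\in\Y_1}\deg(h_y)\leq\deg(F)$, with an estimate $\deg(F)\leq 3(d-1)-\indeg(\Syz(I))$ that follows from Proposition~\ref{Proposition2.9}. Since the first inequality is already established, the entire content of the corollary reduces to proving this second inequality and checking that the resulting quantity is strictly less than $3(d-1)$.

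First I would write $\delta:=3(d-1)-\deg(F)$, so that the target inequality $\deg(F)\leq 3(d-1)-\indeg(\Syz(I))$ is equivalent to $\indeg(\Syz(I))\leq\delta$. Proposition~\ref{Proposition2.9} supplies exactly what is needed: since $p$ does not divide $d$, it produces a nonzero syzygy $(a_0,a_1,a_2,a_3)$ of $f_0,\ldots,f_3$ with each $a_i=D_i/F$ homogeneous of degree $\delta$, where the $D_i$ are the signed $3\times 3$ minors of $J(\ff)$. The existence of this nonzero syzygy in degree $\delta$ means $\Syz(I)_\delta\neq 0$, and therefore $\indeg(\Syz(I))\leq\delta$ by definition of the initial degree. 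Substituting back gives
$$\sum_{y\in\Y_1}\deg(h_y)\ \leq\ \deg(F)\ =\ 3(d-1)-\delta\ \leq\ 3(d-1)-\indeg(\Syz(I)),$$
which is the first asserted inequality.

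For the strict inequality $3(d-1)-\indeg(\Syz(I))<3(d-1)$, I must verify that $\indeg(\Syz(I))>0$, i.e.\ that $\Syz(I)$ has no nonzero element in degree $0$. A degree-$0$ syzygy is precisely a nonzero $k$-linear relation $\lambda_0f_0+\cdots+\lambda_3f_3=0$ among the $f_i$. But the hypotheses of Corollary~\ref{Corollary3.1} require that $f_0,\ldots,f_3$ form a \emph{minimal} generating set of $I$, so no such linear relation exists and $\Syz(I)_0=0$. Hence $\indeg(\Syz(I))\geq 1>0$, giving the strict inequality. (One could alternatively invoke Corollary~\ref{Corollary2.10}: minimality of the generators rules out linear dependence over $k$, which forces $\deg(F)<3(d-1)$, hence $\delta\geq 1$.)

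The argument is essentially an assembly of results already in place, so the main point requiring care is not an obstacle so much as a bookkeeping check: one must confirm that Proposition~\ref{Proposition2.9} genuinely yields a nonzero tuple $(a_0,\ldots,a_3)$ of degree exactly $\delta$ and that the $a_i=D_i/F$ are well-defined homogeneous polynomials. This holds because $F$ is by definition the gcd of the generators of $I_3(J(\ff))$ — equivalently the gcd of the $D_i$ — so each $D_i/F\in R$, and since $F$ is the \emph{greatest} common divisor the tuple $(a_0,\ldots,a_3)$ cannot be identically zero. The degree count $\deg(a_i)=\deg(D_i)-\deg(F)=3(d-1)-\deg(F)=\delta$ then completes the verification.
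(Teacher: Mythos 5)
Your proof is correct and takes essentially the same route as the paper's: combine Corollary~\ref{Corollary3.1} with the syzygy $(D_0/F,\ldots,D_3/F)$ of degree $\delta=3(d-1)-\deg(F)$ supplied by Proposition~\ref{Proposition2.9} to get $\deg(F)\leq 3(d-1)-\indeg(\Syz(I))$, then use minimality of the generating set to exclude a degree-zero syzygy and obtain strictness. The only difference is that you make explicit the bookkeeping (well-definedness, nonvanishing, and degree of the $a_i=D_i/F$) that the paper leaves implicit.
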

 \begin{proof}
 By Proposition~\ref{Proposition2.9},
 \begin{align*}
 \deg(F)\leq 3(d-1)-\indeg(\Syz(I)).
 \end{align*}
 and $\indeg(\Syz(I))=0$ if and only if $f_0,\ldots, f_3$ are linearly dependent over $k.$
 \end{proof}
 Notice that the conditions $I_3(J(\ff))\neq 0$ and $p$ does not divide $d$ are automatically satisfied if $k$ is of characteristic zero.
 
 \begin{Exem}\cite[Example~2]{QHTran17} 
 Consider the parameterization given by $\ff=f_0,f_1,f_2,f_3,$ with
 \begin{align*}
 f_0= X_1^2X_2^4-X_1^4X_2^2,\qquad &f_2=X_0^2X_1^2X_2^2-X_0^2X_1^4,\\
 f_1=X_0^4X_2^2-X_2^6,\qquad \quad\;&f_3=X_0^4X_1^2-X_1^2X_2^4.
 \end{align*}
 Using  {\tt Macaulay2} \cite{Macaulay2}, the greatest common divisor  of generators of $I_3(J(\ff))$ is 
 $$F=X_0X_1^3X_2(X_0^4-X_2^4)(X_1^2-X_2^2).$$
 It is known as in \cite[Example~2]{QHTran17} that
 $$\sum_{y\in \Y_1}\deg(h_y)=8\leq\deg(F)=11\leq 3.5-\indeg(\Syz(\ff))=13.$$
 \end{Exem}


\end{document}